\numberwithin{equation}{section}
\newtheorem{Th}{Theorem}[section]
\newtheorem{Lem}[Th]{Lemma}
\newtheorem{Def}[Th]{Definition}
\newtheorem{Cor}[Th]{Corollary}
\newtheorem{Rem}[Th]{Remark}
\newtheorem{Ex}[Th]{Example}
\newcommand{\R}{\mathbb R}
\def\R{\mathbb{R}}
\def\N{\mathbb{N}}
\def \sin {\hbox{\rm  sin }}
\def \cos {\hbox {\rm  cos }}
\begin{document}
\title{Stability Tests for Second Order Linear and Nonlinear Delayed Models}
\author{Leonid Berezansky \\
Dept. of Math, Ben-Gurion University of the Negev,\\ Beer-Sheva 84105, Israel \\
Elena Braverman\thanks{Corresponding author. E-mail {\em
maelena@math.ucalgary.ca}. Fax (403)-282-5150. Phone (403)-220-3956} \\
Dept. of Math \& Stats, University of Calgary, \\ 2500 University Dr. NW, Calgary, AB, Canada
T2N1N4 \\
and Lev Idels
\\
Dept. of Math,
   Vancouver Island University,\\
   900 Fifth St. Nanaimo, BC, Canada \, V9S5J5 }
\date{}
\maketitle


\begin{abstract}

For the nonlinear second order Lienard-type equations with time-varying delays
$$
\ddot{x}(t)+\sum_{k=1}^m f_k(t,x(t),\dot{x}(g_k(t)))+\sum_{k=1}^l s_k(t,x(h_k(t)))=0,
$$
global asymptotic  stability conditions are obtained. The results are based on the new
sufficient stability conditions for relevant linear equations and are applied to derive explicit  stability
conditions  for the nonlinear Kaldor-Kalecki business cycle model.
We also explore multistability of the sunflower non-autonomous equation and its modifications.
\end{abstract}

\noindent
{\bf AMS Subject Classification:} 34K20, 92D25, 34K45, 34K12, 34K25

\noindent
{\bf Keywords:} Second order delay differential equations; global asymptotic stability; boundedness of solutions;
Lienard-type nonautonomous linear and nonlinear delay differential equations; sunflower equation;
the Kaldor-Kalecki business cycle model; variable delays

\section{Introduction}

The second order delay differential equation
\begin{equation}
\label{0}
\displaystyle\ddot{x}+f(t,x(t),\dot{x}(t-\tau))+g(t, x(t), x(t-\tau))=0
\end{equation}
has a  more than 65-year history of study, and was used to examine
aftereffects in mechanics, physics, biology, medicine and economics (see, for example, \cite{Kol}).
Recently, these models have been used  to mimic regenerative vibrations in a milling
process, a balancing motion and chatter vibrations. For example, a one degree of freedom milling equation
\begin{equation}
\label{stepa}
\displaystyle\ddot{x}(t)+a\dot{x}(t)+bx(t)=-\alpha \left[ x(t)-x(t-\tau(t)) \right]
\end{equation}
was introduced in \cite{Xie}.
The milling model with several delays
\begin{equation}
\label{st3}
\displaystyle\ddot{x}(t)+a\dot{x}(t)+bx(t)+\sum_{k=1}^{p} \alpha_{k} [x(t)-x(t-\tau_{k})]^{k}=0\nonumber
\end{equation}
was recently studied, mostly numerically, in   \cite{stepa2,Kim,Liu}.
The following milling models with variable parameters were derived and examined in \cite{Kol,Gabor2, Gabor1,Xie,Wan,Yan}:
\begin{equation}
\label{L}
\displaystyle\ddot{x}(t)+a\dot{x}(t)+b(t)x(t)=c(t)x(t-\tau(t)),
\end{equation}
\begin{equation}
\label{sta}
\displaystyle\ddot{x}(t)+a\dot{x}(t)+b x(t)+\sum_{k=1}^{p} \alpha_{k}(t) [x(t)-x(t-\tau_{k}(t))]^{k}=0.\nonumber
\end{equation}
In economics, the well-known Kaldor-Kalecki business cycle model expressed as the delayed system of two nonlinear equations
\cite{Ga}, in some cases can be reduced to the second order equation (see, for example, \cite{Sz})
\begin{equation}\label{Busn}
\ddot{x}(t)+[\alpha-\beta p'(x(t))]\dot{x}(t)+\gamma [p(x(t))-\eta x(t)]+\delta p(x(t-\tau))=0.
\end{equation}
Here $p(x)$ is a frequently used in mathematical economics sigmoid function \cite{Ga},
e.g. $p(x)=\frac{A}{1+e^{-bx}}-\frac{A}{2}$, and all coefficients are nonnegative constants.

Different techniques were applied to study second-order delay equations  in
\cite{burt2,  Burt, caha, Gy, Liu2, Long, Pi} and \cite{Tun}--\cite{Zhang1}.
Characteristic quasipolynomials were broadly used for local stability analysis of autonomous models, (see, for
example, \cite{Kol}). The fixed point technique for second order differential and functional equations was pioneered by T. A. Burton
\cite{burt5,Bur6}.
In the paper \cite{Cahla} explicit and easily-verifiable tests were obtained for the autonomous model
\begin{equation}
\label{cah}
\ddot{x}(t)=p_1\dot{x}(t)+p_2\dot{x}(t-\tau)+q_1x(t)+q_2x(t-\tau).
\end{equation}

\begin{Th}\cite{Cahla}
Assume that at least one of the following conditions holds:
1) $p_1p_2>0, q_1>0,q_2>0$ or 2) $p_1>0,p_2>0, q_1>0, q_2<0$.
Then equation (\ref{cah}) is unstable.
\end{Th}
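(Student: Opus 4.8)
The plan is to pass to the characteristic quasipolynomial and exhibit, in every case, a zero of it in the open right half-plane; for a linear autonomous delay equation the presence of a characteristic root with positive real part forces the trivial solution to be unstable. Substituting $x(t)=e^{\lambda t}$ into (\ref{cah}) gives
\begin{equation*}
\Delta(\lambda):=\lambda^2-p_1\lambda-q_1-(p_2\lambda+q_2)e^{-\lambda\tau}=0.
\end{equation*}
Two elementary observations drive the argument: since $\tau>0$, the delayed block $(p_2\lambda+q_2)e^{-\lambda\tau}\to0$ as $\lambda\to+\infty$ along the real axis, so $\Delta(\lambda)\to+\infty$; and $\Delta(0)=-(q_1+q_2)$.

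In the easy regime I would simply invoke the intermediate value theorem. Whenever $q_1+q_2>0$ --- which holds throughout case 1) (there $q_1,q_2>0$) and in the part of case 2) with $q_1>|q_2|$ --- one has $\Delta(0)<0$ while $\Delta(+\infty)=+\infty$, so $\Delta$ has a positive real zero $\lambda_0>0$. This already yields instability, and it uses only the signs of $q_1,q_2$.

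The genuine difficulty is case 2) with $q_1+q_2<0$ (that is, $q_1<|q_2|$), because there $\Delta(0)>0$ and $\Delta$ may stay positive on all of $[0,\infty)$, so no real positive root exists and the unstable mode is a complex-conjugate pair. Here I would continue in $\tau$. At $\tau=0$ equation (\ref{cah}) degenerates to the ODE with characteristic polynomial $\lambda^2-(p_1+p_2)\lambda-(q_1+q_2)$; since $p_1,p_2>0$ its linear coefficient $-(p_1+p_2)$ is negative, so by Routh--Hurwitz it is unstable, and since its constant term $-(q_1+q_2)>0$ both roots lie in the right half-plane. Thus $\Delta(\cdot\,;0)$ has exactly two right-half-plane zeros. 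Their number can change as $\tau$ grows only when a root crosses the imaginary axis, $\lambda=i\omega$. Separating $\Delta(i\omega)=0$ into real and imaginary parts and eliminating $\tau$ by squaring and adding leads to
\begin{equation*}
(\omega^2)^2+(2q_1+p_1^2-p_2^2)\,\omega^2+(q_1^2-q_2^2)=0 .
\end{equation*}
Because $q_1<|q_2|$ the constant term $q_1^2-q_2^2$ is negative, so this quadratic in $\omega^2$ has exactly one positive root $\omega_0^2$: there is a single crossing frequency. I would then compute the crossing direction and show $\mathrm{sign}\,\frac{d(\mathrm{Re}\,\lambda)}{d\tau}>0$ at every critical delay (the left-hand polynomial, as a function of $\omega^2$, has positive derivative at its single positive root), so each crossing is destabilizing. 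Hence the count of right-half-plane roots never drops below its value $2$ at $\tau=0$, and instability persists for every $\tau\ge0$.

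The main obstacle is precisely this last step: for $q_1+q_2<0$ the real-root argument is unavailable, so one must rule out stabilization by delay by controlling all imaginary-axis crossings, and the decisive structural fact is that $q_1^2<q_2^2$ produces a unique crossing frequency at which the transversality sign is positive. Verifying that sign via implicit differentiation of $\Delta(\lambda;\tau)=0$ is the technical heart. The boundary value $q_1+q_2=0$ is handled separately, since there $\lambda=0$ is a root of $\Delta(\cdot\,;\tau)$ for all $\tau$, together with the positive root inherited from the ODE.
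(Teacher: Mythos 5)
This theorem is quoted in the paper from the Cahlon--Schmidt reference \cite{Cahla} and appears there without proof, so there is no in-paper argument to compare yours against; your proposal has to stand on its own, and in structure it does. The easy branch is right: whenever $q_1+q_2>0$ (all of case 1, and case 2 with $q_1>|q_2|$), the characteristic function $\Delta(\lambda)=\lambda^2-p_1\lambda-q_1-(p_2\lambda+q_2)e^{-\lambda\tau}$ satisfies $\Delta(0)=-(q_1+q_2)<0$ and $\Delta(\lambda)\to+\infty$ along the positive reals, so a positive real root exists and instability follows. The hard branch is also set up correctly: at $\tau=0$ the polynomial $\lambda^2-(p_1+p_2)\lambda-(q_1+q_2)$ has both roots in the open right half-plane (positive sum and positive product of roots), your crossing polynomial $S(u)=u^2+(2q_1+p_1^2-p_2^2)u+(q_1^2-q_2^2)$ is the correct elimination of $\tau$, it has exactly one positive root when $q_1^2<q_2^2$, and the standard Cooke--van den Driessche identity $\mathrm{sign}\,\frac{d\,\mathrm{Re}\,\lambda}{d\tau}=\mathrm{sign}\,S'(\omega_0^2)$ then gives a destabilizing crossing, since an upward parabola with negative constant term is increasing at its unique positive root.

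Two details are asserted rather than proved and should be written out to close the argument. First, the claim that the number of right-half-plane roots can change only via imaginary-axis crossings needs the remark that roots cannot enter from infinity: for $\mathrm{Re}\,\lambda\geq 0$ one has $|e^{-\lambda\tau}|\leq 1$, hence $|\Delta(\lambda)|\geq |\lambda|^2-p_1|\lambda|-q_1-p_2|\lambda|-|q_2|$, so all right-half-plane roots stay in a bounded set uniformly for $\tau$ in compact intervals. Second, in the boundary case $q_1+q_2=0$ you should note that $\lambda=0$ is a \emph{simple} root for every $\tau$, because $\Delta'(0)=-p_1-p_2+\tau q_2<0$ when $q_2<0$; this rules out any root passing through the origin, so the positive root $p_1+p_2$ inherited from $\tau=0$ can only exit the right half-plane through a nonzero imaginary crossing, and those are all left-to-right by the same transversality computation (there $S(u)=u\bigl(u+2q_1+p_1^2-p_2^2\bigr)$, whose positive root, if any, again has $S'>0$). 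With these additions your proof is complete and is, as far as one can tell, in the same characteristic-root spirit as the cited source.
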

\begin{Th}\cite{Cahla}
\label{th_cahl2}
Assume $p_1=p_2=0$, $q_2>0$ and denote $B=\tau^2q_1$, $D=\tau^2q_2$.
Equation (\ref{cah}) is asymptotically stable if and only if $q_1<0$ and
there exists $k \in \N$ such that
$$
2k\pi<\sqrt{-B}<(2k+1)\pi,~ D<\min\left\{ -(2k)^2\pi^2-B,(2k+1)^2\pi^2+B \right\}.
$$
\end{Th}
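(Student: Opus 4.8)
The plan is to pass to the characteristic quasipolynomial and locate its roots relative to the imaginary axis. Substituting $x(t)=e^{\lambda t}$ into $\ddot x=q_1x(t)+q_2x(t-\tau)$ (the case $p_1=p_2=0$) gives $\lambda^2-q_1-q_2e^{-\lambda\tau}=0$, and after the scaling $\mu=\lambda\tau$ this becomes
$$
P(\mu):=\mu^2-B-De^{-\mu}=0,\qquad B=\tau^2q_1,\quad D=\tau^2q_2>0,
$$
so asymptotic stability is equivalent to $\Re\mu<0$ for every root of $P$. First I would settle the sign of $q_1$: for real $r$ one has $F(r):=P(r)$ with $F(0)=-B-D$ and $F(r)\to+\infty$ as $r\to+\infty$, so $B+D\ge0$ yields a nonnegative real root and destroys asymptotic stability. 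Since $D>0$, the necessary condition $B+D<0$ forces $B<-D<0$, i.e. $q_1<0$; accordingly set $\omega_0=\sqrt{-B}>0$.

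Next I would run a boundary-crossing (D-decomposition) argument. A root $\mu=i\omega$ on the imaginary axis forces $D\sin\omega=0$ from the imaginary part of $P(i\omega)=0$; since $D>0$ this gives $\omega=n\pi$, and the real part produces the crossing lines $D=(-1)^{n+1}(n^2\pi^2+B)$. Splitting into $n=2m$ and $n=2m+1$ gives exactly the two families $D=-(2m)^2\pi^2-B$ (admissible for $B<-(2m)^2\pi^2$) and $D=(2m+1)^2\pi^2+B$ (admissible for $B>-(2m+1)^2\pi^2$). Implicit differentiation, $\dfrac{d\mu}{dD}=\dfrac{e^{-\mu}}{2\mu+De^{-\mu}}$, evaluated at $\mu=in\pi$ and using $De^{-\mu}=-n^2\pi^2-B$ on the line, gives $\Re\,\dfrac{d\mu}{dD}>0$ on every such line. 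Thus each line is crossed transversally, increasing $D$ always sends a conjugate pair from the left half-plane into the right, and for fixed $B$ the number of right half-plane roots is nondecreasing in $D$ and can vanish only below the lowest crossing line.

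The key bookkeeping step is to identify that lowest line and fix the admissible range of $B$. Within the strip $-(2k+1)^2\pi^2<B<-(2k)^2\pi^2$ (equivalently $2k\pi<\sqrt{-B}<(2k+1)\pi$) a short comparison shows that among the admissible even lines the largest index $m=k$ is lowest, and among the admissible odd lines the smallest index $m=k$ is lowest, so the no-crossing region is $D<\min\{-(2k)^2\pi^2-B,\ (2k+1)^2\pi^2+B\}$, matching the statement. To fix the count to zero rather than merely constant, I would analyze the limit $D\to0^+$: there $P$ has only $\mu=\pm i\omega_0$, and $\Re\,\dfrac{d\mu}{dD}\big|_{D=0}=-\dfrac{\sin\omega_0}{2\omega_0}$, which is negative precisely when $\sin\sqrt{-B}>0$, i.e. when $\sqrt{-B}\in(2k\pi,(2k+1)\pi)$. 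In the complementary strips this derivative is positive, so any $D>0$ immediately creates right half-plane roots and stability is impossible; this is exactly what forces the strip condition in the theorem.

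The main obstacle I anticipate is not the transversality computations but controlling the infinitely many delay-induced roots that appear once $D>0$: the polynomial $\mu^2=B$ has two roots, whereas $P$ has infinitely many, so I must verify that as $D$ grows from $0$ the extra roots enter through the far left half-plane and can reach the imaginary axis only along the lines already found. I would handle this with an a priori bound showing that for bounded $D$ any root of large modulus has $\Re\mu$ very negative (balancing $\mu^2$ against $De^{-\mu}$), combined with an argument-principle count on a large right half-disc. Together with the monotone crossing picture, this upgrades ``no imaginary roots below the $\min$ line'' to ``no right half-plane roots,'' and yields the claimed equivalence.
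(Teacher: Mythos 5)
The paper itself contains no proof of this statement: Theorem~\ref{th_cahl2} is quoted from \cite{Cahla} (Cahlon and Schmidt), so there is no in-paper argument to compare yours against; I can only judge your proposal on its own and against the method of that reference. Your route is sound and standard: reduction to the quasipolynomial $P(\mu)=\mu^2-B-De^{-\mu}$, necessity of $B+D<0$ (hence $q_1<0$) via a real-root argument, D-decomposition giving the crossing lines $D=-(2m)^2\pi^2-B$ and $D=(2m+1)^2\pi^2+B$, the transversality sign $\Re\,d\mu/dD>0$ on every admissible line, the computation $\Re\,d\mu/dD\big|_{D=0}=-\sin\sqrt{-B}/(2\sqrt{-B})$ at the two roots present when $D=0$, and the identification of the lowest even line ($m=k$) and lowest odd line ($m=k$) inside the strip $2k\pi<\sqrt{-B}<(2k+1)\pi$. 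I verified each of these computations and they are correct; this is essentially the characteristic-root analysis underlying the algorithmic tests of \cite{Cahla}, organized as a continuation-in-$D$ argument rather than a Pontryagin-type interlacing analysis of $(\mu^2-B)e^{\mu}-D$. The obstacle you flag yourself (ruling out roots entering from infinity, so the right half-plane root count is finite, nondecreasing in $D$, and equal to zero for small $D>0$ when $\sin\sqrt{-B}>0$) is real, and your plan for it---a priori bounds forcing large-modulus roots to have very negative real parts, plus an argument-principle count---is the correct and standard way to close it.

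There is, however, one genuine gap in the ``only if'' direction. Your claim that ``in the complementary strips the derivative is positive, so any $D>0$ immediately creates right half-plane roots'' covers only the open strips $(2k+1)\pi<\sqrt{-B}<(2k+2)\pi$ and silently omits the boundary cases $\sqrt{-B}=n\pi$, for which the theorem also asserts instability (no admissible $k$ exists there). At such a point the first-order test degenerates: with $\mu_0=in\pi$ at $D=0$ one gets $\mu'(0)=\pm i/(2n\pi)$, purely imaginary, so the first-order motion is tangent to the imaginary axis and your criterion is inconclusive. The case can be settled with the same machinery pushed one order further: differentiating the identity $2\mu\mu'-e^{-\mu}+De^{-\mu}\mu'=0$ once more and evaluating at $D=0$ gives
\begin{equation*}
\Re\,\mu''(0)=\frac{1}{2n^2\pi^2}>0
\end{equation*}
for both parities of $n$, so the pair still escapes into the right half-plane, $\Re\,\mu(D)\sim D^2/(4n^2\pi^2)>0$ for small $D>0$, and monotonicity of the crossing count then gives instability for all $D>0$. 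Without this (or an equivalent limiting/semicontinuity argument), the stated equivalence is not fully proved.
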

\begin{Ex}
The second-order delay equation
\begin{equation}\label{Ex1}
\ddot{x}(t)=-49x(t)+7x(t-1)
\end{equation}
is asymptotically stable by Theorem~\ref{th_cahl2}.
Based on the algorithmic tests presented in \cite{Cahla}, the equation
\begin{equation}\label{Ex2}
\ddot{x}(t)=0.6\dot{x}(t)+0.3\dot{x}(t-1)-2x(t)+x(t-1)
\end{equation}
is asymptotically stable.
It is interesting to note that equations (\ref{Ex1}) and (\ref{Ex2}) without delays are unstable.
This illustrates a very interesting feature of second-order delay differential equations, i.e. delays may improve asymptotic
properties of a given equation, whereas delays in first-order linear equations  have mostly destabilizing effects or do not change stability
of the model.
\end{Ex}
Several stability tests for non-autonomous linear models with  variable delays
\begin{equation}
\label{1}
\ddot{x}(t) +a(t)\dot{x}(g(t))+b(t)x(h(t))=0,
\end{equation}
\begin{equation}
\label{2}
\ddot{x}(t) +a(t)\dot{x}(t)+b(t)x(t)+a_1(t)\dot{x}(g(t))+b_1(t)x(h(t))=0,
\end{equation}
were obtained in our recent paper \cite{B2}, under the assumptions:
$a$, $a_1$, $b$ and $b_1$ are Lebesgue measurable and
essentially bounded functions on $[0,\infty)$;
$a(t)\geq a_0>0$, $b(t)\geq b_0>0$, $0\leq t-h(t)\leq \tau$, $0\leq t-g(t)\leq \delta$,
$a^2(t)\geq 4b(t)$,  $\int_{g(t)}^t a(s)ds<1/e$.
Below $\| \cdot \|$ is the norm in the space ${\bf L}_{\infty}[t_0,\infty)$.

\begin{Th}\cite[Theorem 5.1]{B2}
If for some $t_0\geq 0$
\begin{equation}
\label{3}
\delta\left\|\frac{a}{b}\right\|\left(\|a \|\left\|\frac{b}{a}\right\|+
\|b\| \right)+\tau\left\|\frac{b}{a}\right\|<1,\nonumber
\end{equation}
then equation (\ref{1}) is exponentially stable.
\end{Th}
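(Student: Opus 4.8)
The plan is to prove exponential stability via a Bohl--Perron-type argument: it suffices to show that for every right-hand side $f\in\mathbf{L}_\infty$ the solution $x$ of the inhomogeneous equation associated with (\ref{1}), taken with zero initial conditions, satisfies $x,\dot x\in\mathbf{L}_\infty$. First I would recast (\ref{1}) as a perturbation of the undelayed ``model'' equation by adding and subtracting the instantaneous friction and restoring terms:
\[
\ddot x(t)+a(t)\dot x(t)+b(t)x(t)=a(t)\big[\dot x(t)-\dot x(g(t))\big]+b(t)\big[x(t)-x(h(t))\big]+f(t)=: (\mathcal S x)(t)+f(t).
\]
Here $\mathcal S$ collects the whole delay effect, and the two summands on the right are controlled, respectively, by the delay bound $\delta$ (on $t-g(t)$) and $\tau$ (on $t-h(t)$).

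Next I would analyze the model operator $\mathcal G$ sending $F\mapsto z$, where $\ddot z+a(t)\dot z+b(t)z=F$ with zero initial data. The hypothesis $a^2(t)\ge 4b(t)$ is what makes this tractable: it forces the two ``frozen'' characteristic roots $\lambda_\pm(t)=\tfrac12\big(a(t)\pm\sqrt{a^2(t)-4b(t)}\big)$ to be real and positive, with $\lambda_+\lambda_-=b$ and $\lambda_++\lambda_-=a$, so the slow rate is $\lambda_-\sim b/a$ and the slow time constant is $1/\lambda_-\sim a/b$. Solving the Riccati equation $\dot\mu=\mu^2-a\mu+b$ for a bounded positive $\mu$ separating these roots factors the model operator into two first-order operators with coefficients bounded below by positive constants; integrating them in turn yields the $\mathbf L_\infty$ bounds $\|z\|\le c_0\|F\|$ and $\|\dot z\|\le c_1\|F\|$, with $c_0$ of order $\|a/b\|/\|a\|$ and $c_1$ of order $\|b/a\|/\|b\|$ (in the constant-coefficient case these reduce to $\int_0^\infty|G|=1/b$ and the matching derivative bound $1/a$). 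The condition $\int_{g(t)}^t a(s)\,ds<1/e$ enters precisely in controlling the velocity: it is the sharp constant guaranteeing that the auxiliary first-order delay equation $\dot y+a(t)y(g(t))=\psi$ governing $y=\dot x$ has a uniformly bounded solution operator, so the delayed friction does not destroy the derivative estimate.

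Finally I would close the loop. Writing the delay differences as integrals and substituting $\ddot x=-a\,\dot x(g)-b\,x(h)+f$,
\[
\dot x(t)-\dot x(g(t))=\int_{g(t)}^{t}\ddot x(s)\,ds,\qquad x(t)-x(h(t))=\int_{h(t)}^{t}\dot x(s)\,ds,
\]
gives $\|\dot x(\cdot)-\dot x(g(\cdot))\|\le\delta\big(\|a\|\,\|\dot x\|+\|b\|\,\|x\|\big)$ and $\|x(\cdot)-x(h(\cdot))\|\le\tau\,\|\dot x\|$, hence a bound on $\|\mathcal S x\|$. Feeding this into $\|x\|\le c_0(\|f\|+\|\mathcal S x\|)$ and $\|\dot x\|\le c_1(\|f\|+\|\mathcal S x\|)$ produces a self-referential linear system in $(\|x\|,\|\dot x\|)$ whose homogeneous contraction factor is exactly the left-hand side of the stated inequality; when it is strictly less than $1$, the Neumann series for $(I-\mathcal G\mathcal S)^{-1}$ converges, so $x,\dot x\in\mathbf L_\infty$ and Bohl--Perron yields exponential stability. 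I expect the main obstacle to be the model-operator step: upgrading the clean constant-coefficient Green's-function constants ($1/b$ and $1/a$) to the time-varying setting through the Riccati factorization while tracking the two time scales so sharply that the resulting bound is precisely $\delta\|a/b\|(\|a\|\,\|b/a\|+\|b\|)+\tau\|b/a\|$ and not a cruder majorant.
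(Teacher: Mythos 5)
Note first that the paper you are working from does not actually prove this statement: it is quoted verbatim from \cite{B2} (Theorem 5.1 there), so your proposal must be measured against the Bohl--Perron argument of that paper, whose structure is essentially dictated by the shape of the constant in the hypothesis. Your overall frame --- reduce exponential stability to $\mathbf{L}_\infty$-boundedness of solutions of the inhomogeneous problem, introduce a model operator, write the delay terms as $\int_{g(t)}^t \ddot x$ and $\int_{h(t)}^t \dot x$, substitute $\ddot x=-a\dot x(g)-bx(h)+f$, and close a fixed-point estimate --- is the right one, and your bookkeeping of the $\delta$- and $\tau$-terms is correct.

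The gap is exactly in the step you yourself flag as the main obstacle, and it is not a removable technicality: the derivative bound you require from the non-delayed model operator is false. For $\ddot z+a\dot z+bz=F$ with constant coefficients the Green's function derivative $G'$ changes sign, so $\int_0^\infty |G'(t)|\,dt=2\max G$, which equals $4/(ea)$ at critical damping $b=a^2/4$ (allowed, since only $a^2\ge 4b$ is assumed) and tends to $2/a$ in the overdamped regime; it is never the $1/a$ (i.e.\ $c_1\sim\|b/a\|/\|b\|$) that your $2\times 2$ scheme needs, so your contraction factor carries an irremovable factor of at least $4/e$ on the $\tau$- and $\delta\|a\|$-terms and yields a strictly stronger hypothesis than the one stated. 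A symptom that the scheme is miswired is that the assumption $\int_{g(t)}^t a(s)\,ds<1/e$ never actually enters your estimates: your model has no delay, and the delayed friction sits entirely inside the perturbation $\mathcal S$. The proof in \cite{B2} treats $x$ and $\dot x$ asymmetrically. The function $y=\dot x$ is kept as a solution of the \emph{first-order delayed} equation $\dot y(t)+a(t)y(g(t))=-b(t)x(h(t))+f(t)$; the $1/e$ condition makes the fundamental function $C(t,s)$ of $\dot y+a(t)y(g(t))=0$ positive, whence $\int_{t_0}^t C(t,s)a(s)\,ds\le 1$ and the weighted bound $\|\dot x\|\le\|b/a\|\,\|x\|+\|f/a\|$ --- this is the sole source of every $\|b/a\|$ in the statement, and it is unobtainable from the second-order model. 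Only $\|x\|$ is then estimated through the non-delayed second-order model, where $a^2\ge 4b$ gives $W(t,s)\ge 0$ and $\int W(t,s)b(s)\,ds\le 1$, i.e.\ $\|x\|\le\|(\mathcal S x+f)/b\|$ with the division \emph{inside} the norm (this is what produces $\|a/b\|$ rather than the cruder $\|a\|\,\|1/b\|$ your unweighted constants give). Combining that with $\|\ddot x\|\le\|a\|\,\|\dot x\|+\|b\|\,\|x\|+\|f\|$ from the equation itself and substituting the first-order bound makes the coefficient of $\|x\|$ exactly $\delta\|a/b\|\bigl(\|a\|\,\|b/a\|+\|b\|\bigr)+\tau\|b/a\|$; the route you propose cannot reach this expression.
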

\begin{Th} \cite[Theorem 5.3]{B2}
Suppose for some $t_0\geq 0$
$$
\left\|\frac{a_1}{a}\right\|<1,~ \left\|\frac{a_1}{b}\right\|\frac{\left\|\frac{b}{a}\right\|+
\left\|\frac{b_1}{a}\right\|}
{1-\left\|\frac{a_1}{a}\right\|}+\left\|\frac{b_1}{b}\right\|<1,
$$
then equation (\ref{2}) is exponentially stable.
\end{Th}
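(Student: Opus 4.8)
The plan is to deduce exponential stability from the Bohl--Perron principle: since the \emph{non-delayed} part of (\ref{2}) generates an exponentially stable flow, it suffices to show that for every $f\in{\bf L}_\infty$ the solution of
\[
\ddot{x}+a\dot{x}+bx+a_1\dot{x}(g)+b_1 x(h)=f
\]
with zero initial data has both $x$ and $\dot{x}$ in ${\bf L}_\infty$ (here and below $y(g)$ abbreviates $y(g(t))$, etc.). Setting $y=\dot{x}$, I would view (\ref{2}) as the coupled pair
\[
\dot{y}+a y=-b\,x-a_1\,y(g)-b_1\,x(h),\qquad \ddot{x}+a\dot{x}+bx=-a_1\,y(g)-b_1\,x(h),
\]
and treat the delayed terms as forcing for the two exponentially stable reference equations $\dot{y}+ay=\cdot$ and $\ddot{x}+a\dot{x}+bx=\cdot\,$.

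First I would record the needed Cauchy-function estimates. The first-order reference has the explicit positive Cauchy function $Y(t,s)=\exp\big(-\int_s^t a\big)$, and a one-line computation gives the sharp bound $\sup_{t\ge t_0}\int_{t_0}^t a(s)Y(t,s)\,ds=\sup_{t\ge t_0}\big(1-e^{-\int_{t_0}^t a}\big)\le 1$. For the second-order reference the overdamping hypothesis $a^2\ge 4b$ (together with $a\ge a_0>0$, $b\ge b_0>0$) lets me factor the operator as $(\tfrac{d}{dt}+p)(\tfrac{d}{dt}+q)$ with $p,q>0$ solving $p+q=a$, $pq+q'=b$, so that its Cauchy function
\[
X(t,s)=\int_s^t\exp\!\Big(-\int_\sigma^t q-\int_s^\sigma p\Big)d\sigma\ge 0
\]
is nonnegative and the flow is exponentially stable.

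The crucial step is the sharp bound $\sup_{t\ge t_0}\int_{t_0}^t b(s)X(t,s)\,ds\le 1$. I would prove it by observing that $\phi(t)=\int_{t_0}^t b(s)X(t,s)\,ds$ solves $\ddot\phi+a\dot\phi+b\phi=b$ with $\phi(t_0)=\dot\phi(t_0)=0$, so $\psi=\phi-1$ satisfies the homogeneous overdamped equation with $\psi(t_0)=-1$, $\dot\psi(t_0)=0$; the no-overshoot property of an overdamped (non-oscillatory) equation then forces $-1\le\psi\le0$, i.e. $0\le\phi\le1$. This is exactly where $a^2\ge4b$ is indispensable rather than merely $a,b>0$, and establishing this comparison/positivity statement rigorously in the non-autonomous regime is, I expect, the main obstacle.

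Finally I would assemble the two integral representations. Using $X,Y\ge0$, the rewritings $a_1=\tfrac{a_1}{a}a$, $b=\tfrac{b}{a}a$, $b_1=\tfrac{b_1}{a}a$ in the $y$-equation and $a_1=\tfrac{a_1}{b}b$, $b_1=\tfrac{b_1}{b}b$ in the $x$-equation, together with $\|y(g(\cdot))\|\le\|y\|$ and $\|x(h(\cdot))\|\le\|x\|$ and the two bounds above, I obtain
\[
\|y\|\le\left\|\tfrac{a_1}{a}\right\|\|y\|+\left(\left\|\tfrac{b}{a}\right\|+\left\|\tfrac{b_1}{a}\right\|\right)\|x\|+C,\qquad \|x\|\le\left\|\tfrac{a_1}{b}\right\|\|y\|+\left\|\tfrac{b_1}{b}\right\|\|x\|+C'.
\]
The hypothesis $\|a_1/a\|<1$ makes the first inequality solvable for $\|y\|$, giving $\|y\|\le\frac{\|b/a\|+\|b_1/a\|}{1-\|a_1/a\|}\|x\|+C''$; substituting into the second yields $\big(1-\|b_1/b\|-\|a_1/b\|\frac{\|b/a\|+\|b_1/a\|}{1-\|a_1/a\|}\big)\|x\|\le C'''$. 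The second hypothesis of the theorem is precisely the positivity of this coefficient, which bounds $\|x\|$ and hence $\|y\|=\|\dot{x}\|$; boundedness of $(x,\dot{x})$ for all $f\in{\bf L}_\infty$ gives exponential stability by Bohl--Perron. Note that the variable delays $\tau,\delta$ enter only through the trivial bounds $\|y(g(\cdot))\|\le\|y\|$, $\|x(h(\cdot))\|\le\|x\|$, which is why they drop out of the final condition.
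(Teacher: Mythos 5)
Your overall architecture is the right one, and since the present paper only quotes this theorem from \cite{B2} without proof, the only meaningful comparison is with that method, which is exactly the Bohl--Perron scheme you describe: the two simultaneous representations of $x$ and $y=\dot x$ through the Cauchy functions of $\dot y+ay=\cdot$ and $\ddot x+a\dot x+bx=\cdot\,$, followed by solving the resulting pair of norm inequalities, reproduce the stated constants precisely, and that bookkeeping is correct. The genuine gap is the step you yourself flag as the main obstacle: the claim that pointwise overdamping $a^2(t)\ge 4b(t)$, with merely measurable bounded coefficients, yields a factorization $(\frac{d}{dt}+p)(\frac{d}{dt}+q)$ with $p,q>0$, hence $X(t,s)\ge 0$ and $\phi(t)=\int_{t_0}^t b(s)X(t,s)\,ds\le 1$. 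This is not just hard in the non-autonomous regime; it is false. Take $0<\varepsilon<1/2$, $a(t)=2$, $b(t)=1$ on $[0,2)$ and $a(t)=2\varepsilon$, $b(t)=\varepsilon^2$ on $[2,\infty)$, so that $a^2(t)\equiv 4b(t)$, $a(t)\ge 2\varepsilon>0$, $b(t)\ge\varepsilon^2>0$. Then $X(t,0)=te^{-t}$ on $[0,2]$, while for $t>2$ one computes $X(t,0)=e^{-2}e^{-\varepsilon(t-2)}\bigl[2-(1-2\varepsilon)(t-2)\bigr]$, which is negative for $t>2+2/(1-2\varepsilon)$. Likewise your $\psi=\phi-1$ equals $-(1+t)e^{-t}$ on $[0,2]$ and then $e^{-2}e^{-\varepsilon(t-2)}\bigl[-3+(2-3\varepsilon)(t-2)\bigr]$, whose maximum is of order $1/\varepsilon$: so $\sup_t\phi(t)$ is not only bigger than $1$ but unbounded as $\varepsilon\to 0$. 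In particular no factorization with $p,q\ge 0$ can exist for this equation (it would force $X\ge 0$), and both model-equation estimates on which your two inequalities rest collapse. Alternating the two regimes on successive intervals makes the sign change of $X(\cdot,s)$ recur for arbitrarily large $s$, so the ``for some $t_0\ge 0$'' in the statement cannot rescue the argument, and smoothing the jump changes nothing, since solutions depend continuously on the coefficients.

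The mechanism is worth internalizing: ``no overshoot'' is a property of autonomous (or monotonically damped) equations. Formally, the Liouville substitution $x=z\exp(-\frac12\int a)$ transforms the equation into $\ddot z+(b-a^2/4-a'/2)z=0$, so a rapid drop in the damping acts as a large positive potential kick that the pointwise inequality $a^2\ge 4b$ does not see. To close the gap you need a hypothesis robust under rough $a$, for instance uniform overdamping $a_0^2\ge 4\|b\|$, where $a_0$ is the essential infimum of $a$: then the Riccati equation $q'=q^2-aq+b$ has a solution trapped in the positive invariant interval between $\inf_t \frac{2b(t)}{a(t)+\sqrt{a^2(t)-4b(t)}}$ and $a_0/2$ (the vector field points inward at both endpoints), which gives your factorization with $p=a-q\ge a_0/2$ and $q>0$; positivity of $X$ follows, and writing $b=q'+pq$ and integrating by parts yields $\phi\le 1$, after which your assembly of the two norm inequalities goes through verbatim. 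As written, however, the proposal proves the theorem only under such a strengthened overdamping hypothesis, not under the assumptions actually quoted in the paper.
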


In the present paper, a specially designed substitution  transforms  linear second order  equations into
a system, with a further application of the M-matrix method. This and the linearization techniques are used
to devise new global stability tests for nonlinear non-autonomous models.
These results are explicit, easily verifiable and can be applied to a general class of second order
non-autonomous equations. Some of the theorems of the present paper complement our earlier results
\cite{B1, B2}, as well as the tests obtained in recent papers \cite{Cahla, caha, Gy}.

The paper is organized as follows.
Section 2 contains stability results for linear second order non-autonomous equations with several delays.
To illustrate efficiency of the results obtained each stability test is accompanied by numerical examples.
In  Section 3 the tests for linear models are applied to nonlinear Lienard-type equations of the second order.
Applications incorporate a global stability test for the non-autonomous business cycle model.
Section 4 includes the study of bounds and multistability properties for the sunflower model
and its generalizations. In particular, sufficient conditions for convergence to one of an infinite number of
equilibrium points are presented, and existence of unbounded linearly growing solutions is illustrated.
Final remarks are presented in Section 5.

\section{Stability tests for linear Lienard equations}

The technique in this section involves parlaying a second order equation into two first order equations.
Consider a linear equation of the second order
\begin{equation}\label{general}
\ddot{x}(t)+\sum_{k=1}^m a_k(t)\dot{x}(h_k(t))+\sum_{k=1}^m b_k(t)\int_{g_k(t)}^t \dot{x}(s)ds
+\sum_{k=1}^m c_k(t)x(r_k(t))=0.
\end{equation}
Together with equation (\ref{general}),  for any $t_0\geq 0$ we consider the initial condition
\begin{equation}\label{in}
x(t)=\varphi(t), ~\dot{x}(t)=\psi(t), ~t\leq t_0.
\end{equation}
Henceforth, we assume that the following assumptions are satisfied:

(a1) $a_i, b_i,  c_i, i=1,\dots,m$ are Lebesgue measurable and essentially bounded on $[0,\infty)$;

(a2) $h_i, g_i, r_i$
are  Lebesgue measurable functions,
$
h_i(t)\leq t, g_i(t)\leq t, r_i(t)\leq t$, \\ $\lim\limits_{t\to\infty} h_i(t)=\infty$, $\lim\limits_{t\to\infty}
g_i(t)=\infty, \lim_{t\to\infty} r_i(t)=\infty$,
$i,j=1,\dots,m $;

(a3) $\varphi$ and $ \psi$ are Borel measurable bounded functions.

\begin{Def}A function $x:\R \rightarrow \R$ with locally
absolutely continuous on $[t_0,\infty)$ derivative $\dot{x}$ is called
{\bf a solution} of  problem (\ref{general}), (\ref{in})
if it satisfies equation (\ref{general}) for almost every
$t\in [t_0, \infty)$ and equalities (\ref{in}) for $t\leq t_0$.
\end{Def}
We quote a useful lemma that will play a major role in the proofs.
\begin{Lem}\label{lemma1} \cite{BBI_submitted}
Consider the system
\begin{equation}
\label{system_1}
\dot{x_i}(t)=-a_i(t)x_i(t)+\sum_{j=1}^m\sum_{k=1}^{l_{ij}}b_{ij}^k(t)x_j(h_{ij}^k(t)), ~~i=1,\dots,m,
\end{equation}
where
${\displaystyle
a_i(t)\geq \alpha_i>0, ~|b_{ij}^k(t)|\leq L_{ij}^k, ~ t-h_{ij}^k(t)\leq \sigma_{ij}^k.
}$

If the matrix $\displaystyle B=(b_{ij})_{i,j=1}^m$, with
$\displaystyle b_{ii}= \left. 1-\left( \sum_{k=1}^{l_{ii}}L_{ii}^k \right) \right/ \alpha_i$,
$\displaystyle b_{ij}=- \left. \left( \sum_{k=1}^{l_{ij}}L_{ij}^k\right) \right/ \alpha_i$, $i\neq j$,
is an M-matrix, then system (\ref{system_1}) is exponentially stable.
\end{Lem}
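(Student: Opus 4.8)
The plan is to exploit the standard characterization of the M-matrix hypothesis and then reduce exponential stability to a boundedness statement via an exponential change of variables. First I would rewrite the hypothesis in the form $B = I - C$, where $C = (C_{ij})$ is the nonnegative matrix with entries $C_{ij} = \left(\sum_{k=1}^{l_{ij}} L_{ij}^k\right)/\alpha_i$. For such a $B$ (a matrix with nonpositive off-diagonal entries) being a nonsingular M-matrix is equivalent to $\rho(C) < 1$, and also to the existence of a strictly positive vector $d = (d_1, \dots, d_m)^\top$ with $B d > 0$, i.e.
$$
\alpha_i d_i > \sum_{j=1}^m \left(\sum_{k=1}^{l_{ij}} L_{ij}^k\right) d_j, \qquad i = 1, \dots, m.
$$
I would use this vector $d$ as a system of scaling weights throughout, and recall that $\rho(C)<1$ yields $B^{-1} = \sum_{n\ge 0} C^n \ge 0$ entrywise.

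Next I would introduce $y_i(t) = e^{\lambda t} x_i(t)$. A direct computation shows that $y$ solves a system of the same form as (\ref{system_1}), with $a_i(t)$ replaced by $a_i(t) - \lambda$ and each coefficient $b_{ij}^k(t)$ replaced by $b_{ij}^k(t) e^{\lambda(t - h_{ij}^k(t))}$, whose modulus is bounded by $L_{ij}^k e^{\lambda \sigma_{ij}^k}$. Since the inequalities $B d > 0$ are strict, continuity guarantees that for all sufficiently small $\lambda > 0$ the shifted matrix $B^\lambda = I - C^\lambda$, built from $\alpha_i - \lambda$ and the bounds $L_{ij}^k e^{\lambda \sigma_{ij}^k}$, still satisfies $B^\lambda d > 0$ and is again a (nonsingular) M-matrix. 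It then suffices to prove that every solution of the $y$-system is bounded on $[t_0, \infty)$, since this immediately yields $|x_i(t)| \leq \mathrm{const}\cdot e^{-\lambda t}$, which is the required exponential stability.

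For the boundedness step I would work in the weighted space $L_\infty^m$ with norm $\|y\|_d = \max_i \sup_{t \geq t_0} |y_i(t)|/d_i$ and use the variation-of-constants representation $y_i(t) = y_i(t_0) e^{-\int_{t_0}^t (a_i - \lambda)} + \int_{t_0}^t e^{-\int_s^t (a_i - \lambda)} \sum_{j,k} b_{ij}^k(s) e^{\lambda(s - h_{ij}^k(s))} y_j(h_{ij}^k(s))\, ds$. Bounding the kernel by $e^{-(\alpha_i - \lambda)(t-s)}$ and the coefficients by $L_{ij}^k e^{\lambda \sigma_{ij}^k}$, the integral operator $T$ given by the convolution term satisfies $\|Ty\|_d \leq \theta \|y\|_d$ with $\theta = \max_i (C^\lambda d)_i/d_i < 1$, the last inequality being exactly $B^\lambda d > 0$. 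Thus $T$ is a contraction in $\|\cdot\|_d$, the initial-data term decays, and the equation $y = Ty + \phi$ admits a unique bounded solution, giving the desired bound.

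The hard part will be the boundedness step, and specifically the passage from the contraction estimate to the conclusion that the actual solution of the initial value problem is the bounded fixed point — a Bohl--Perron-type issue, since a priori the solution of (\ref{system_1}), (\ref{in}) is only known to be finite on compact intervals and need not lie in $L_\infty$ before the argument. I would resolve this either by the Bohl--Perron principle (a bounded right-hand side forces bounded output once the associated operator is a contraction) or, equivalently, by setting $Y_i = \limsup_{t\to\infty}|y_i(t)|$ and passing to the limit superior in the integral inequality above: the initial term vanishes, the kernel integrates to $1/(\alpha_i-\lambda)$, and the time-varying delays $h_{ij}^k(s)\to\infty$ let me replace $|y_j(h_{ij}^k(s))|$ by its limit superior, yielding $B^\lambda Y \leq 0$ with $Y \geq 0$; then $(B^\lambda)^{-1} \geq 0$ forces $Y = 0$. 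Justifying this limit-superior manipulation uniformly over the several delays, together with the preliminary finiteness of the $Y_i$, is the only genuinely delicate point, and it is the M-matrix structure, through $(B^\lambda)^{-1}\ge 0$ (equivalently $\rho(C^\lambda)<1$), that closes the estimate.
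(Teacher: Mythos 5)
The paper contains no proof of Lemma~\ref{lemma1} to compare against: the lemma is explicitly quoted from the reference \cite{BBI_submitted} (``We quote a useful lemma\dots''), so your attempt can only be judged on its own merits and against that external source.

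Judged so, your proposal is correct, and its skeleton --- the equivalence of the M-matrix hypothesis with the existence of $d>0$ such that $Bd>0$, the substitution $y_i(t)=e^{\lambda t}x_i(t)$, the preservation of the strict inequalities $B^{\lambda}d>0$ for small $\lambda>0$, and the weighted variation-of-constants estimate --- is exactly the standard machinery for results of this type. The one point you flag as genuinely delicate (that the solution is a priori only locally bounded, so the contraction estimate cannot be applied to it in $L_\infty$ directly) dissolves if you run your own estimate on finite horizons, with no limsup manipulation and no separate finiteness argument. Indeed, let $\sigma=\max_{i,j,k}\sigma_{ij}^k$ and set $M(T)=\max_i \sup_{t\in[t_0-\sigma,T]}|y_i(t)|/d_i$, which is finite for every $T$ since $y$ is continuous on compact intervals; your kernel and coefficient bounds give, for $t\le T$, the inequality $|y_i(t)|/d_i \le K + \theta M(T)$, where $K$ is determined by the initial data and $\theta=\max_i (C^{\lambda}d)_i/d_i<1$ is your contraction constant, and taking the supremum over $[t_0-\sigma,T]$ yields $M(T)\le K/(1-\theta)$ uniformly in $T$. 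This bounds $y$ by a constant proportional to the size of the initial data, i.e. $|x_i(t)|\le \mathrm{const}\cdot e^{-\lambda (t-t_0)}$ with the constant controlled by the initial function, which is precisely exponential stability. Two minor remarks: your bound $|b_{ij}^k(t)e^{\lambda(t-h_{ij}^k(t))}|\le L_{ij}^k e^{\lambda\sigma_{ij}^k}$ tacitly uses $h_{ij}^k(t)\le t$, which the lemma's statement leaves implicit but the paper's standing assumption (a2) supplies; and the cited source argues via the Bohl--Perron principle (boundedness of solutions under every bounded forcing implies exponential stability), so your explicit exponential shift is the more self-contained of the two routes, at the modest cost of the $\lambda$-perturbation bookkeeping.
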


We recall that a matrix $B=(b_{ij})_{i,j=1}^m$ is a (nonsingular) {\em $M$-matrix} if $b_{ij}\leq 0$, $i\neq j$ and
one of the following equivalent conditions holds: either there exists a positive inverse matrix $B^{-1} > 0$
or all the principal minors of the matrix $B$ are positive.

Further proofs will also require the following lemma.

\begin{Lem}
\label{lemma2}
Consider the system
\begin{equation}
\label{system_2}
\dot{x_i}(t)=-a_i(t)x_i(t)+\sum_{j=1}^m\sum_{k=1}^{l_{ij}}\left( c_{ij}^k (t)  x_j(g_{ij}^k(t))+d_{ij}^k(t)
\int_{h_{ij}^k(t)}^t x_j(s)ds\right), ~~i=1,\dots,m,
\end{equation}
where
$\displaystyle
a_i(t)\geq \alpha_i>0,~ |d_{ij}^k(t)|\leq L_{ij}^k, ~|c_{ij}^k(t)|\leq C_{ij}^k,
~t-h_{ij}^k(t)\leq \sigma_{ij}^k, ~ t-g_{ij}^k(t)\leq \tau.
$
If the matrix $\displaystyle B=(b_{ij})_{i,j=1}^m$, with
$\displaystyle b_{ii}=1-\sum_{k=1}^{l_{ii}}\left(L_{ii}^k\sigma_{ii}^k+
C_{ii}^k\right)/\alpha_i$,
$\displaystyle b_{ij}=-\sum_{k=1}^{l_{ij}}\left(L_{ij}^k\sigma_{ij}^k+ C_{ij}^k\right)/\alpha_i$,
$i\neq j$, is an M-matrix, then system (\ref{system_2}) is exponentially stable.
\end{Lem}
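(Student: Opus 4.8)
The plan is to follow the same scheme that underlies Lemma~\ref{lemma1}: reduce exponential stability to an a priori sup-norm estimate via a Bohl--Perron type argument, and then read the estimate off from the hypothesis that $B$ is an $M$-matrix. The only genuinely new feature compared with system~(\ref{system_1}) is the distributed term $d_{ij}^k(t)\int_{h_{ij}^k(t)}^t x_j(s)\,ds$, and the key observation is that, since the length of the integration interval satisfies $t-h_{ij}^k(t)\le\sigma_{ij}^k$, this term is controlled in sup-norm by $L_{ij}^k\sigma_{ij}^k\|x_j\|$, exactly the quantity entering $B$. Thus each distributed delay behaves like a discrete delay of Lemma~\ref{lemma1} with effective coefficient bound $L_{ij}^k\sigma_{ij}^k$, while each term $c_{ij}^k(t)x_j(g_{ij}^k(t))$ contributes $C_{ij}^k\|x_j\|$; this explains why the matrix $B$ in Lemma~\ref{lemma2} is the matrix of Lemma~\ref{lemma1} with $L_{ij}^k$ replaced by $L_{ij}^k\sigma_{ij}^k+C_{ij}^k$.

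Concretely, I would first invoke the Bohl--Perron principle for this class of systems: since each diagonal equation $\dot y_i=-a_i(t)y_i$ is uniformly exponentially stable ($a_i(t)\ge\alpha_i>0$) and all coefficients are bounded, exponential stability of~(\ref{system_2}) follows once every solution of the associated nonhomogeneous system with a bounded right-hand side $f\in{\bf L}_\infty$ and zero initial data is shown to be bounded on $[t_0,\infty)$. Writing $W_i(t,s)=\exp\left(-\int_s^t a_i(\theta)\,d\theta\right)$ for the Cauchy function of the diagonal part, I would represent each component as
\[
x_i(t)=\int_{t_0}^t W_i(t,s)\Big[\sum_{j,k}\Big(c_{ij}^k(s)x_j(g_{ij}^k(s))+d_{ij}^k(s)\!\int_{h_{ij}^k(s)}^s x_j(\theta)\,d\theta\Big)+f_i(s)\Big]ds,
\]
and use $0\le W_i(t,s)\le e^{-\alpha_i(t-s)}$, hence $\int_{t_0}^t W_i(t,s)\,ds\le 1/\alpha_i$.

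To keep the sup-norms finite a priori I would work on a finite window $[t_0,T]$, set $X_i=\max_{[t_0,T]}|x_i|$, and estimate the two kinds of terms: $|c_{ij}^k(s)x_j(g_{ij}^k(s))|\le C_{ij}^k X_j$ for the discrete part, and $\big|d_{ij}^k(s)\int_{h_{ij}^k(s)}^s x_j(\theta)\,d\theta\big|\le L_{ij}^k\sigma_{ij}^k X_j$ for the distributed part. Inserting these and using $\int W_i\le 1/\alpha_i$ yields, for every $i$,
\[
X_i\le \sum_{j=1}^m\frac{1}{\alpha_i}\sum_{k=1}^{l_{ij}}\big(C_{ij}^k+L_{ij}^k\sigma_{ij}^k\big)X_j+\frac{\|f\|}{\alpha_i},
\]
which in vector form reads $BX\le c$ with $X=(X_1,\dots,X_m)^{\top}$, $c=(\|f\|/\alpha_1,\dots,\|f\|/\alpha_m)^{\top}$, and $B$ the matrix of the statement. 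Since $B$ is an $M$-matrix, $B^{-1}\ge 0$, so $X\le B^{-1}c$, a bound independent of $T$; letting $T\to\infty$ gives uniform boundedness of the solution, and the Bohl--Perron principle then delivers exponential stability.

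I expect the main obstacle to lie not in the $M$-matrix linear algebra (which is immediate once the inequality is set up) but in justifying the reduction cleanly: verifying that the hypotheses of the Bohl--Perron theorem hold for~(\ref{system_2}) in the presence of distributed delays, and absorbing the contribution of the initial data for $t\le t_0$ (the values $x_j(g_{ij}^k(s))$ and $\int_{h_{ij}^k(s)}^s x_j$ whose arguments fall below $t_0$) into the bounded forcing term without spoiling the estimate $\int_{t_0}^t W_i(t,s)\,ds\le 1/\alpha_i$. The bound $\big|\int_{h_{ij}^k(s)}^s x_j\big|\le\sigma_{ij}^k X_j$ is the single place where the distributed structure is used, and keeping the resulting constant exactly equal to $L_{ij}^k\sigma_{ij}^k$ is what makes $B$ come out as stated.
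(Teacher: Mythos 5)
Your proposal is sound, but it does not follow the paper's route. The paper's own proof is a two-line reduction to Lemma~\ref{lemma1}: since $x_j$ is continuous, the integral mean value theorem gives a point $p_{ij}^k(t)\in\bigl(h_{ij}^k(t),t\bigr)$ with $\int_{h_{ij}^k(t)}^t x_j(s)\,ds=x_j\bigl(p_{ij}^k(t)\bigr)\bigl(t-h_{ij}^k(t)\bigr)$, so the fixed solution of system~(\ref{system_2}) is a solution of a system of the form~(\ref{system_1}) in which each distributed term appears as a discrete-delay term with coefficient bounded by $L_{ij}^k\sigma_{ij}^k$ and delay bounded by $\sigma_{ij}^k$, and each $c$-term is unchanged; Lemma~\ref{lemma1} then applies verbatim with the stated matrix $B$. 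You instead unroll the machinery that presumably lies \emph{under} Lemma~\ref{lemma1}: variation of constants against the diagonal Cauchy functions $W_i$, a windowed sup-norm estimate giving $BX\le c$, the M-matrix inversion $X\le B^{-1}c$, and a Bohl--Perron theorem to pass from boundedness to exponential stability. Both arguments are correct, and the comparison is instructive. The paper's reduction is shorter and uses Lemma~\ref{lemma1} as a black box, at the cost of two points it leaves tacit: the intermediate-point function $p_{ij}^k(t)$ must admit a measurable choice, and the resulting system~(\ref{system_1}) depends on the particular solution, so one must observe that the exponential rate furnished by Lemma~\ref{lemma1} depends only on the uniform bounds $\alpha_i$, $L_{ij}^k\sigma_{ij}^k+C_{ij}^k$, $\sigma_{ij}^k$, not on the coefficients themselves. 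Your argument treats the distributed delay natively and makes the role of the M-matrix completely explicit, but it outsources the hard analytic step to ``the Bohl--Perron principle for this class of systems,'' which is exactly the nontrivial content of the cited Lemma~\ref{lemma1}; in effect you re-prove a distributed-delay generalization of that lemma rather than exploit it, which is legitimate but misses the one-step reduction the hypotheses were designed for.
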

\begin{proof}
Let $x(t)$ be a solution of (\ref{system_2}). Since $x_j(t)$ are continuous then for any $i,j,k$ and $t$ there exists
$p_{ij}^k(t)\in (h_{ij}^k(t),t)$ such that $\displaystyle x_j(p_{ij}^k(t))(t-h_{ij}^k(t))=\int_{h_{ij}^k(t)}^t x_j(s)ds$.

Thus $x_j$ are solutions of system (\ref{system_1}) with $b_{ij}^k(t)x_j(h_{ij}^k(t))$ being replaced by
$c_{ij}^k x_j(g_{ij}^k(t))+d_{ij}^k(t)(t-h_{ij}^k(t))x_j(p_{ij}^k(t))$.
We have $|c_{ij}^k(t)|\leq C_{ij}^k,~|d_{ij}^k(t)(t-h_{ij}^k(t))|\leq L_{ij}^k\sigma_{ij}^k, ~i\neq j$.
The application of Lemma~\ref{lemma1}
validates the proof.
\end{proof}
{\bf Note that a different proof of Lemma~\ref{lemma2} involves application of the Halanay-type inequalities (see, for example, \cite{Liz}).}
\\
To examine the equation
\begin{equation}\label{7}
\ddot{x}(t)+a(t)\dot{x}(t)+b(t)x(t)+\sum_{k=1}^m c_k(t)x(h_k(t))=0
\end{equation}
we assume
$$
0<a\leq a(t)\leq A,~ 0<b\leq b(t)\leq B,~ |c_k(t)|\leq C_k,~ t-h_k(t)\leq \tau.
$$

\begin{Th}\label{th1} Suppose at least one of the following conditions holds:\\
1) $\displaystyle B\leq\frac{a^2}{4},~ \sum_{k=1}^m C_k<b-\frac{a}{2}(A-a)$,\\
2) $\displaystyle b\geq\frac{a}{2}\left(A-\frac{a}{2}\right),~ \sum_{k=1}^m C_k<\frac{a^2}{2}-B$.\\
Then equation (\ref{7}) is exponentially stable.
\end{Th}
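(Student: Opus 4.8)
The plan is to linearize the damping via the substitution $y(t)=\dot{x}(t)+\frac a2 x(t)$, where $a$ is the lower bound of $a(t)$, turning the scalar second order equation~\myref{7} into a $2\times2$ first order system to which Lemma~\ref{lemma1} applies. Writing $x_1=x$ and $x_2=y$, the definition of $y$ gives at once $\dot{x}_1=-\frac a2 x_1+x_2$, while differentiating $y$ and inserting $\ddot{x}=-a(t)\dot{x}-b(t)x-\sum_k c_k(t)x(h_k)$ together with $\dot{x}=x_2-\frac a2 x_1$ yields
\begin{equation}
\dot{x}_2=-\Big(a(t)-\frac a2\Big)x_2+\Big({-}\frac{a^2}{4}+\frac a2 a(t)-b(t)\Big)x_1-\sum_{k=1}^m c_k(t)\,x_1(h_k(t)).\nonumber
\end{equation}
The diagonal coefficients are $a_1(t)\equiv\frac a2$ and $a_2(t)=a(t)-\frac a2\ge a-\frac a2=\frac a2$, so one may take $\alpha_1=\alpha_2=\frac a2>0$. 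All the couplings are point evaluations (some delayed), so this is exactly of the form~\myref{system_1}.

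Next I would read off the entries of the M-matrix of Lemma~\ref{lemma1}. The first equation has no self-delayed term and a single undelayed coupling $x_2$ of size $1$, giving $b_{11}=1$ and $b_{12}=-1/\alpha_1=-2/a$. The second equation has no self-delayed term either, while its coupling into $x_1$ consists of the undelayed coefficient ${-}\frac{a^2}{4}+\frac a2 a(t)-b(t)$, bounded by $C_{21}:=\sup_t\big|{-}\frac{a^2}{4}+\frac a2 a(t)-b(t)\big|$, together with the delayed terms of total bound $\sum_k C_k$. Hence $b_{22}=1$ and $b_{21}=-\big(C_{21}+\sum_k C_k\big)/\alpha_2=-2\big(C_{21}+\sum_k C_k\big)/a$. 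For a $2\times2$ matrix with nonpositive off-diagonal entries the M-matrix property is equivalent to $b_{11}>0$ and $\det B>0$; since $b_{11}=1$ this reduces to the single inequality
\begin{equation}
C_{21}+\sum_{k=1}^m C_k<\frac{a^2}{4}.\nonumber
\end{equation}

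The remaining work is to evaluate $C_{21}$, and this is where the two hypotheses enter. Set $\phi(t)={-}\frac{a^2}{4}+\frac a2 a(t)-b(t)$; using $a(t)\in[a,A]$ and $b(t)\in[b,B]$, its extreme values are $\phi_{\max}=\frac a2 A-b-\frac{a^2}{4}$ and $\phi_{\min}=\frac{a^2}{4}-B$. Under condition~1 the bound $B\le\frac{a^2}{4}$ forces $\phi_{\min}\ge0$, so $\phi\ge0$ and $C_{21}=\phi_{\max}$; substituting into the displayed inequality gives exactly $\sum_k C_k<b-\frac a2(A-a)$. Under condition~2 the bound $b\ge\frac a2\big(A-\frac a2\big)$ forces $\phi_{\max}\le0$, whence $\phi_{\min}\le\phi_{\max}\le0$, so $\phi\le0$, $B\ge\frac{a^2}{4}$, and $C_{21}=|\phi_{\min}|=B-\frac{a^2}{4}$; the inequality then becomes $\sum_k C_k<\frac{a^2}{2}-B$. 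In either case the M-matrix condition holds, Lemma~\ref{lemma1} yields exponential stability of the system, and since $x=x_1$ and $\dot{x}=x_2-\frac a2 x_1$ this gives exponential stability of~\myref{7}.

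I expect the only genuinely delicate point to be the sign analysis of $\phi$: the two stated hypotheses are precisely the conditions guaranteeing that $\phi$ is sign-definite, which is what allows the supremum defining $C_{21}$ to be replaced by a single endpoint value and collapses the M-matrix inequality to the clean thresholds in the statement. Everything else—the substitution, the identification with system~\myref{system_1}, and the $2\times2$ M-matrix computation—is routine.
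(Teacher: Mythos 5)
Your proof is correct and is essentially the paper's own argument: the substitution $y=\dot{x}+\frac{a}{2}x$ is exactly the paper's $\dot{x}=-\frac{a}{2}x+y$, leading to the same $2\times 2$ system, the same application of Lemma~\ref{lemma1}, and the same M-matrix determinant condition, with your sign analysis of $\phi$ reproducing the paper's two-case estimates of $\frac{a}{2}\bigl(a(t)-\frac{a}{2}\bigr)-b(t)$. The only difference is presentational: you collapse both cases into the single inequality $C_{21}+\sum_k C_k<\frac{a^2}{4}$ before specializing, whereas the paper writes out the two M-matrices separately.
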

\begin{proof}
Substituting $\displaystyle \dot{x}=-\frac{a}{2} x+y,
\ddot{x}=-\frac{a}{2}\dot{x}+\dot{y}$  into equation (\ref{7}),
we arrive at
\begin{equation}\label{8}
\begin{array}{l}
\displaystyle \dot{x}=-\frac{a}{2} x+y\\
\displaystyle \dot{y}=\left[\frac{a}{2}\left(a(t)-\frac{a}{2}\right)-b(t)\right]x(t)-\sum_{k=1}^m
c_k(t)x(h_k(t))-\left(a(t)-\frac{a}{2}\right)y(t).
\end{array}
\end{equation}
Condition 1) yields
$\displaystyle
\frac{a}{2}\left(a(t)-\frac{a}{2}\right)-b(t)\geq \frac{a^2}{4}-B \geq 0,
$  $\displaystyle
 \frac{a}{2}\left(a(t)-\frac{a}{2}\right)-b(t)\leq \frac{a}{2}\left(A-\frac{a}{2}\right)-b.
$
Hence the matrix
$$
\left(\begin{array}{cc}
1~~~~~~~~~~~~~~~~~&-\frac{2}{a}\\
\displaystyle  -\frac{2}{a} \left( \frac{a}{2}\left(A-\frac{a}{2}\right)-b+\sum_{k=1}^m  C_k \right) &~1
\end{array}\right)
$$
is an M-matrix. By Lemma~\ref{lemma1}  equation (\ref{7}) is exponentially stable.

If condition 2) holds then
$\displaystyle
b(t)-\frac{a}{2}\left(a(t)-\frac{a}{2}\right)\geq  b-\frac{a}{2}\left(A-\frac{a}{2}\right) \geq 0,
$ \\
$\displaystyle  b(t)-\frac{a}{2}\left(a(t)-\frac{a}{2}\right)\leq
B-\frac{a}{2}\left(a-\frac{a}{2}\right)=B-a^2/4,
$
and the matrix
$$
\left(\begin{array}{cc}
1~~~~~~~~~~~~~~~~&-\frac{2}{a}\\
\displaystyle -\frac{2}{a} \left( B-\frac{a^2}{4}+\sum_{k=1}^m C_k \right) &~1
\end{array}\right)
$$
is an M-matrix. By Lemma~\ref{lemma1}  equation (\ref{7}) is exponentially stable.
\end{proof}

\begin{Rem}
{\bf
Application of the classical substitution $\dot{x}=y$ is not useful in our stability investigation,
since for the system obtained after this substitution, the matrix $B$ in Lemma~\ref{lemma1} is not an M-matrix.
For equation (\ref{7}) with constant $a$ and $b$, $|C(t)|\leq C$  and $m=1$
\begin{equation}\label{t}
\ddot{x}(t)+a\dot{x}(t)+b(t)x(t)+c(t)x(h(t))=0,
\end{equation}
we compare two substitutions
\begin{equation}\label{s}
\dot{x}(t)=-\lambda x(t)+y(t),~ \lambda>0,
\end{equation}
and $\displaystyle \dot{x}(t)=-\frac{a}{2} x(t)+y(t)$.
By Theorem~\ref{th1} equation \eqref{t} is exponentially stable, if at least one of the following conditions holds:

1) $b\leq \frac{a^2}{4},  C<b,$

2) $b>\frac{a^2}{4}, C<\frac{a^2}{2}-b$.

Whereas application of \eqref{s} by the same token yields a slight improvement:
equation \eqref{t} is exponentially stable, if at least one of the following conditions holds:

1), 2) or

3)  $b\leq \frac{a^2}{4}, C<\frac{a^2}{2}-b$ and the following two intervals have a nonempty intersection
$$
\left[\frac{a-\sqrt{a^2-4b}}{2},\frac{a+\sqrt{a^2-4b}}{2}\right] \cap \left[ a-\sqrt{a^2-2(b+C)},a+\sqrt{a^2-2(b+C)}
\right] \neq \emptyset.
$$
Implementation of \eqref{s} for equation (\ref{7}) with
nonconstant coefficients $a(t)$ and $ b(t)$ will produce a more complicated condition 3).
Trading-off these options, we
prefer the substitution $\dot{x}(t)=-\frac{a}{2} x(t)+y(t)$.
}
\end{Rem}

{\bf
The following numerical examples illustrate the application of Theorem~\ref{th1}. }
\begin{Ex}
Consider the delay equation
\begin{equation}\label{9}
\ddot{x}(t)+a\dot{x}(t)+bx(t)+cx(t-h|\sin t|)=0
\end{equation}

a) $a=3, b=1.1, c=-0.8, h=2$. Condition 1) of Theorem~\ref{th1} holds, condition 2) does not hold. Equation (\ref{9})
is asymptotically stable.\\
b) $a=2, b=1.1, c=-0.8, h=2$. Condition 2) of Theorem~\ref{th1} holds, condition 1) does not hold. Equation (\ref{9})
is asymptotically stable.\\
c) $a=0.1, b=1.5, c=-1.45, h=2$. Conditions of Theorem~\ref{th1}  do not hold, and equation (\ref{9})
is unstable. 

{\bf
Let us note that in c) the coefficient of the non-delay term exceeds the one of the delayed term:
$|c(t)|<b(t)$. This is in contrast to the result for the equation with the second derivative omitted
$$
a\dot{x}(t)+b(t)x(t)+c(t)x(h(t))=0,
$$
which is exponentially stable if $a>0$, $b(t)\geq b_0>0$, $|c(t)|<b(t)$, $t-\tau \leq h(t) \leq t$ for $\tau>0$.
}
\end{Ex}

Consider the equation
\begin{equation}\label{10}
\ddot{x}(t)+a(t)\dot{x}(t)+b(t)x(t)+\sum_{k=1}^m c_k(t)\dot{x}(h_k(t))=0,
\end{equation}
where
$$
0<a\leq a(t)\leq A,~ 0<b\leq b(t)\leq B, ~|c_k(t)|\leq C_k,~ t-h_k(t)\leq \tau.
$$
\begin{Th}\label{th2} Suppose  that   at least one of the following conditions holds:\\
1) $\displaystyle B\leq \frac{a^2}{4},~ \sum_{k=1}^m C_k<\frac{2b-a(A-a)}{2a}$,\\
2) $b\displaystyle \geq \frac{a}{2}\left(A-\frac{a}{2}\right),~ \sum_{k=1}^m C_k<\frac{a^2-2B}{2a}$.\\
Then equation (\ref{10}) is exponentially stable.
\end{Th}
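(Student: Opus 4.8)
The plan is to follow the proof of Theorem~\ref{th1} almost verbatim, using the same substitution $\dot{x}=-\frac{a}{2}x+y$, $\ddot{x}=-\frac{a}{2}\dot{x}+\dot{y}$, and then reducing the resulting first-order system to an M-matrix condition via Lemma~\ref{lemma1}. The only genuinely new feature is that the delayed term is now a delayed \emph{velocity} $\dot{x}(h_k(t))$ rather than a delayed position, so I must first rewrite it in the new variables before I can read off the matrix.

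First I would substitute. Since $\frac{a}{2}$ is a constant, the substitution is exact and equation (\ref{10}) becomes
$$
\dot{x}=-\frac{a}{2}x+y,\qquad \dot{y}=\left[\frac{a}{2}\left(a(t)-\frac{a}{2}\right)-b(t)\right]x-\left(a(t)-\frac{a}{2}\right)y-\sum_{k=1}^m c_k(t)\dot{x}(h_k(t)).
$$
The key step is to express the delayed velocity in the new coordinates: evaluating the first relation at $h_k(t)$ gives $\dot{x}(h_k(t))=-\frac{a}{2}x(h_k(t))+y(h_k(t))$. Substituting this, the $y$-equation acquires delayed terms in \emph{both} components,
$$
\dot{y}=\left[\frac{a}{2}\left(a(t)-\frac{a}{2}\right)-b(t)\right]x-\left(a(t)-\frac{a}{2}\right)y+\frac{a}{2}\sum_{k=1}^m c_k(t)x(h_k(t))-\sum_{k=1}^m c_k(t)y(h_k(t)).
$$
This is now a system of the form (\ref{system_1}) with point delays only, so Lemma~\ref{lemma1} applies directly and there is no need to invoke Lemma~\ref{lemma2}.

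Next I would assemble the associated matrix $B$. As in Theorem~\ref{th1}, both diagonal damping rates satisfy $\alpha_1=\alpha_2=\frac{a}{2}$ (using $a(t)-\frac{a}{2}\ge\frac{a}{2}$), and the undelayed cross term $y$ in the first equation contributes $b_{12}=-\frac{2}{a}$. The two new delayed terms distribute as follows: the delayed $x(h_k(t))$ term, bounded by $\frac{a}{2}\sum C_k$, adds to the $x$-coefficient feeding $b_{21}$, while the delayed self-coupling $-\sum c_k(t)y(h_k(t))$, bounded by $\sum C_k$, now lowers the diagonal entry to $b_{22}=1-\frac{2}{a}\sum_{k=1}^m C_k$ (in Theorem~\ref{th1} this entry was simply $1$). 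Under condition 1) one bounds $\frac{a}{2}(a(t)-\frac{a}{2})-b(t)$ above by $\frac{a}{2}(A-\frac{a}{2})-b$ exactly as before, giving
$$
B=\left(\begin{array}{cc} 1 & -\frac{2}{a}\\[4pt] -\frac{2}{a}\left(\frac{a}{2}\left(A-\frac{a}{2}\right)-b+\frac{a}{2}\sum_{k=1}^m C_k\right) & 1-\frac{2}{a}\sum_{k=1}^m C_k\end{array}\right),
$$
while under condition 2) the coefficient of $x$ is bounded in absolute value by $B-\frac{a^2}{4}$, which replaces the first summand of the $(2,1)$-entry accordingly.

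Finally I would verify the M-matrix property by checking that all principal minors are positive. Since $b_{11}=1>0$, and each stated hypothesis forces $\sum C_k<\frac{a}{2}$ (hence $b_{22}>0$), only the determinant condition remains. Expanding $\det B>0$ and clearing denominators, I expect condition 1) to collapse exactly to $\sum_{k=1}^m C_k<\frac{2b-a(A-a)}{2a}$ and condition 2) to $\sum_{k=1}^m C_k<\frac{a^2-2B}{2a}$, matching the statement, after which Lemma~\ref{lemma1} yields exponential stability. The one thing to get right is the bookkeeping of where each $C_k$ contribution lands—specifically the factor $\frac{a}{2}$ attached to the delayed position term versus the factor $1$ attached to the delayed velocity term—since this is precisely what makes $b_{22}$ drop below $1$ and alters the final inequalities relative to Theorem~\ref{th1}.
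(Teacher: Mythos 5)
Your proposal is correct and follows essentially the same route as the paper: the same substitution $\dot{x}=-\frac{a}{2}x+y$, the same rewriting of the delayed velocity as $-\frac{a}{2}x(h_k(t))+y(h_k(t))$ leading to system (\ref{11}), the same two matrices (including the lowered diagonal entry $1-\frac{2}{a}\sum_k C_k$), and the same appeal to Lemma~\ref{lemma1}. Your explicit determinant computations, which reduce exactly to the stated inequalities, simply fill in the M-matrix verification that the paper leaves implicit.
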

\begin{proof}
The substitution $\dot{x}=- \frac{a}{2}x+y, \ddot{x}=-\frac{a}{2}\dot{x}+\dot{y}$ into equation (\ref{10})
yields
\begin{equation}\label{11}
\begin{array}{ll}
\displaystyle \dot{x}=& \displaystyle  -\frac{a}{2} x+y\\
\displaystyle \dot{y}=& \displaystyle  \left[\frac{a}{2}\left(a(t)-\frac{a}{2}\right)-b(t)\right]x(t)+\frac{a}{2}\sum_{k=1}^m c_k(t)x(h_k(t))\\
& \displaystyle -\sum_{k=1}^m c_k(t)y(h_k(t))
-\left(a(t)-\frac{a}{2}\right)y(t).
\end{array}
\end{equation}
If condition 1) holds, we have
$\displaystyle
\frac{a}{2}\left(a(t)-\frac{a}{2}\right)-b(t)\geq \frac{a^2}{4}-B \geq 0,
$ \\ \\ $
\displaystyle \frac{a}{2}\left(a(t)-\frac{a}{2}\right)-b(t)\leq \frac{a}{2}\left(A-\frac{a}{2}\right)-b.
$
Hence the matrix
$$
\left(\begin{array}{cc}
1  &-\frac{2}{a}\\
\displaystyle -\frac{2}{a} \left[ \frac{a}{2}\left(A-\frac{a}{2}\right)-b+\frac{a}{2}\sum_{k=1}^m C_k \right]
& \displaystyle 1-\frac{2}{a}\sum_{k=1}^m C_k
\end{array}\right)
$$
is an M-matrix. By Lemma~\ref{lemma1}  equation (\ref{10}) is exponentially stable.

If the inequalities in 2) hold then
$\displaystyle
b(t)-\frac{a}{2}\left(a(t)-\frac{a}{2}\right)\geq  b-\frac{a}{2}\left(A-\frac{a}{2}\right) \geq 0,
$ \\$
\displaystyle b(t)-\frac{a}{2}\left(a(t)-\frac{a}{2}\right)\leq
B-\frac{a}{2}\left(a-\frac{a}{2}\right)=B-a^2/4.
$
Thus the matrix
$$
\left(\begin{array}{cc}
1& -\frac{2}{a} \\
\displaystyle -\frac{2}{a} \left[ B-\frac{a^2}{4}+\frac{a}{2}\sum_{k=1}^m C_k \right] &
\displaystyle 1-\frac{2}{a}
\sum_{k=1}^m C_k
\end{array}\right)
$$
is an M-matrix. By Lemma~\ref{lemma1}  equation (\ref{10}) is exponentially stable.
\end{proof}

\begin{Ex}
Consider the equation
\begin{equation}\label{12_ex}
\ddot{x}(t)+a\dot{x}(t)+bx(t)+c\dot{x}(t-h|\sin t|)=0
\end{equation}
To illustrate Theorem~\ref{th2}, we  examined:\\
a) $a=2.1$, $b=1$, $c=-0.4$, $h=2$. Condition 1) of Theorem~\ref{th2} holds, condition 2) does not hold. Equation (\ref{12_ex})
is asymptotically stable.\\
b) $a=4$, $b=5$, $c=-0.7$, $h=2$. Condition 2) of Theorem~\ref{th2} holds, condition 1) does not hold. Equation (\ref{12_ex})
is asymptotically stable.\\
c) $a=1$, $b=1.5$, $c=-0.8$, $h=2$. Conditions of the Theorem~\ref{th2}  do not hold, and,  as can be seen from numerical
simulations, equation (\ref{12_ex})
is unstable. Hence, in general, the conditions $a(t)\geq a_0>0, b(t)\geq b_0>0, m=1, |c(t)|<a(t)$
are not sufficient for stability of equation (\ref{10}).
\end{Ex}

Consider the equation
\begin{equation}\label{12}
\ddot{x}(t)+a(t)\dot{x}(t)+\sum_{k=1}^m b_k(t)x(h_k(t))=0,
\end{equation}
where $0<a\leq a(t)\leq A$, $0<b_k\leq b_k(t)\leq B_k,~ t-h_k(t)\leq \tau_k$.

\begin{Th}\label{th3}
Suppose  at least one of the following conditions holds:\\
1) $\displaystyle \sum_{k=1}^m B_k\leq \frac{a^2}{4},~ \frac{a}{2}(A-a)<\sum_{k=1}^m b_k-a\sum_{k=1}^m B_k\tau_k$,\\
2) $\displaystyle \sum_{k=1}^m b_k\geq \frac{a}{2}\left(A-\frac{a}{2}\right),~ \sum_{k=1}^m
B_k\left(1+a\tau_k\right)<\frac{a^2}{2}$.\\
Then equation (\ref{12}) is exponentially stable.
\end{Th}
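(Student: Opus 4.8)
The plan is to mirror the reduction-to-a-system strategy of Theorems~\ref{th1} and~\ref{th2}, applying the same substitution $\dot{x}=-\frac{a}{2}x+y$, $\ddot{x}=-\frac{a}{2}\dot{x}+\dot{y}$. This turns equation~(\ref{12}) into a planar first-order system whose $y$-equation is
$$
\dot{y}=\frac{a}{2}\left(a(t)-\frac{a}{2}\right)x(t)-\sum_{k=1}^m b_k(t)x(h_k(t))-\left(a(t)-\frac{a}{2}\right)y(t).
$$
Both diagonal decay rates are then bounded below by $\alpha_1=\alpha_2=\frac{a}{2}$, since $a(t)-\frac{a}{2}\geq \frac{a}{2}$. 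The main obstacle is that, unlike in~(\ref{7}) and~(\ref{10}), equation~(\ref{12}) carries \emph{no} undelayed restoring term $b(t)x(t)$: the only stabilizing contribution is the delayed sum $\sum_k b_k(t)x(h_k(t))$. A direct appeal to Lemma~\ref{lemma1} would force me to bound $\left|\frac{a}{2}(a(t)-\frac{a}{2})\right|$ and $\sum_k B_k$ separately, destroying the crucial cancellation between the undelayed term $\frac{a}{2}(a(t)-\frac{a}{2})x(t)$ and the delayed term, and yielding a far weaker condition.

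To recover that cancellation, I would rewrite the delayed term via $x(h_k(t))=x(t)-\int_{h_k(t)}^t \dot{x}(s)\,ds$ and then substitute $\dot{x}(s)=-\frac{a}{2}x(s)+y(s)$, so that
$$
-\sum_{k=1}^m b_k(t)x(h_k(t))=-\sum_{k=1}^m b_k(t)x(t)-\frac{a}{2}\sum_{k=1}^m b_k(t)\!\int_{h_k(t)}^t\! x(s)\,ds+\sum_{k=1}^m b_k(t)\!\int_{h_k(t)}^t\! y(s)\,ds.
$$
This aligns the delayed restoring force with $x(t)$, producing the combined undelayed coefficient $\frac{a}{2}(a(t)-\frac{a}{2})-\sum_k b_k(t)$, at the cost of introducing distributed (integral) terms in both $x$ and $y$. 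The resulting system now has exactly the shape of~(\ref{system_2}), so I would invoke Lemma~\ref{lemma2} rather than Lemma~\ref{lemma1}. Reading off the data: $C_{21}$ bounds $\left|\frac{a}{2}(a(t)-\frac{a}{2})-\sum_k b_k(t)\right|$, while $L_{21}^k=\frac{a}{2}B_k$ and $L_{22}^k=B_k$ with lags $\sigma^k=\tau_k$ govern the $\int x$ and $\int y$ terms respectively, and $C_{12}=1$ from the $y$-term in the first equation.

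The endgame is the sign analysis of $\frac{a}{2}(a(t)-\frac{a}{2})-\sum_k b_k(t)$, which splits into the two hypotheses. Under condition 1) the bound $\sum_k B_k\leq \frac{a^2}{4}$ makes this quantity nonnegative, giving $C_{21}=\frac{a}{2}(A-\frac{a}{2})-\sum_k b_k$; under condition 2) the bound $\sum_k b_k\geq \frac{a}{2}(A-\frac{a}{2})$ makes it nonpositive, giving $C_{21}=\sum_k B_k-\frac{a^2}{4}$ (one checks $\sum_k B_k\geq \sum_k b_k\geq \frac{a^2}{4}$, so this is a legitimate nonnegative bound). In each case the $2\times2$ matrix of Lemma~\ref{lemma2} has $b_{11}=1$, $b_{12}=-\frac{2}{a}$, $b_{22}=1-\frac{2}{a}\sum_k B_k\tau_k$, and $b_{21}=-\frac{2}{a}\bigl(C_{21}+\frac{a}{2}\sum_k B_k\tau_k\bigr)$. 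I expect the only genuine work to be verifying the M-matrix criterion: the off-diagonal entries are nonpositive exactly when $C_{21}\geq 0$, and since $b_{11}=1>0$ the nonsingular-M-matrix condition reduces to positivity of the determinant (which then forces $b_{22}>0$ automatically). Expanding $\det B>0$ and clearing denominators by $\frac{a^2}{4}$ should collapse, in case 1), to $\frac{a}{2}(A-a)<\sum_k b_k-a\sum_k B_k\tau_k$, and in case 2), to $\sum_k B_k(1+a\tau_k)<\frac{a^2}{2}$, i.e.\ precisely the stated inequalities. Exponential stability of~(\ref{12}) then follows from Lemma~\ref{lemma2}.
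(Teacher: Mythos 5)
Your proposal is correct and follows essentially the same route as the paper's own proof: the identical substitution $\dot{x}=-\frac{a}{2}x+y$, the same rewriting $x(h_k(t))=x(t)-\int_{h_k(t)}^t\dot{x}(s)\,ds$ with $\dot{x}(s)=-\frac{a}{2}x(s)+y(s)$ to merge the delayed restoring term into the coefficient $\frac{a}{2}\left(a(t)-\frac{a}{2}\right)-\sum_k b_k(t)$, and the same appeal to Lemma~\ref{lemma2} with exactly the two matrices (and sign analysis splitting into conditions 1) and 2)) that appear in the paper, whose M-matrix criteria reduce to the stated inequalities.
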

\begin{proof}
With the substitution $\dot{x}=-\frac{a}{2} x+y,
\ddot{x}=-\frac{a}{2}\dot{x}+\dot{y}$ into equation (\ref{12}),
we arrive at
\begin{equation}\label{13}
\begin{array}{ll}
\displaystyle \dot{x}= & \displaystyle -\frac{a}{2} x+y \\
\displaystyle \dot{y}= & \displaystyle \left[\frac{a}{2}\left(a(t)-\frac{a}{2}\right)-\sum_{k=1}^m b_k(t)\right]x(t)\\
& \displaystyle +\sum_{k=1}^m b_k(t)\int_{h_k(t)}^t \left[-\frac{a}{2} x(s)+y(s)\right]ds-\left(a(t)-\frac{a}{2}\right)y(t).
\end{array}
\end{equation}
If condition 1) holds, we have
$\displaystyle
\frac{a}{2}\left(a(t)-\frac{a}{2}\right)-\sum_{k=1}^m b_k(t)\geq \frac{a^2}{4}-\sum_{k=1}^m B_k \geq 0,
$ \\ $\displaystyle
\frac{a}{2}\left(a(t)-\frac{a}{2}\right)-\sum_{k=1}^m b_k(t)\leq \frac{a}{2}\left(A-\frac{a}{2}\right)-\sum_{k=1}^m b_k.
$
Hence the off-diagonal entries of the matrix
$$
\left(\begin{array}{cc}
1~~~~~~~~~~~~~~~~~&-\frac{2}{a} \\
\displaystyle  -\frac{2}{a} \left[ \frac{a}{2}\left(A-\frac{a}{2}\right)-\sum_{k=1}^m
b_k+\frac{a}{2}\sum_{k=1}^m B_k\tau_k \right] & \displaystyle ~1-\frac{2}{a}\sum_{k=1}^m B_k\tau_k
\end{array}\right)
$$
are non-positive, and the inequalities in 1) yield that it is an M-matrix. By Lemma~\ref{lemma2}  equation (\ref{12}) is exponentially
stable.
Assumption 2) implies
$\displaystyle
\sum_{k=1}^m b_k(t)-\frac{a}{2}\left(a(t)-\frac{a}{2}\right)\geq  \sum_{k=1}^m b_k-\frac{a}{2}\left(A-\frac{a}{2}\right) \geq 0,
$  $\displaystyle
\sum_{k=1}^m b_k(t)-\frac{a}{2}\left(a(t)-\frac{a}{2}\right)\leq \sum_{k=1}^m B_k-\frac{a}{2}\left(a-\frac{a}{2}\right)=\sum_{k=1}^m B_k-a^2/4,
$
therefore the matrix
$$
\left(\begin{array}{cc}
1~~~~~~~~~~~~~~~~&-\frac{2}{a} \\
\displaystyle -\frac{2}{a} \left[ \sum_{k=1}^m B_k-\frac{a^2}{4}+\frac{a}{2}\sum_{k=1}^m B_k\tau_k \right]
&\displaystyle ~1- \frac{2}{a}\sum_{k=1}^m B_k\tau_k
\end{array}\right)
$$
is an M-matrix. By Lemma~\ref{lemma2}  equation (\ref{12}) is exponentially stable.
\end{proof}

\begin{Cor}
Suppose $a(t)\equiv a>0, b_k(t)\equiv b_k>0$, and at least one of the following conditions holds:\\
1) $\displaystyle \sum_{k=1}^m b_k\leq\frac{a^2}{4},~ \sum_{k=1}^m b_k(1-a\tau_k) >0$,\\
2) $\displaystyle \sum_{k=1}^m b_k\geq \frac{a^2}{4},~ \sum_{k=1}^m b_k(1+a\tau_k)<\frac{a^2}{2}$.\\
Then equation (\ref{12}) is exponentially stable.
\end{Cor}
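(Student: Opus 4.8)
The plan is to obtain the statement as a direct specialization of Theorem~\ref{th3} to the autonomous case. In equation (\ref{12}) I would observe that $a(t)\equiv a$ forces the upper bound to coincide with the constant, so $A=a$, and likewise $b_k(t)\equiv b_k$ gives $B_k=b_k$ for each $k$. The whole proof then amounts to substituting these two identifications into the hypotheses of Theorem~\ref{th3} and verifying that they reduce precisely to conditions 1) and 2) stated here.

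First I would check condition 1). Setting $A=a$ makes the term $\frac{a}{2}(A-a)$ vanish, so the inequality $\frac{a}{2}(A-a)<\sum_{k=1}^m b_k-a\sum_{k=1}^m B_k\tau_k$ of Theorem~\ref{th3} becomes $0<\sum_{k=1}^m b_k(1-a\tau_k)$ after using $B_k=b_k$; together with $\sum_{k=1}^m b_k\leq a^2/4$ this is exactly condition 1) above.

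Next I would check condition 2). With $A=a$ one has $\frac{a}{2}\left(A-\frac{a}{2}\right)=\frac{a^2}{4}$, so the threshold $\sum_{k=1}^m b_k\geq\frac{a}{2}\left(A-\frac{a}{2}\right)$ turns into $\sum_{k=1}^m b_k\geq a^2/4$, while the remaining inequality $\sum_{k=1}^m B_k(1+a\tau_k)<a^2/2$ becomes $\sum_{k=1}^m b_k(1+a\tau_k)<a^2/2$ upon setting $B_k=b_k$. These coincide with condition 2) above.

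Since each case of the corollary is identical to the corresponding case of Theorem~\ref{th3} under $A=a$ and $B_k=b_k$, I would conclude exponential stability by invoking that theorem. I expect no genuine obstacle here: the only content is the elementary simplification of the coefficient bounds, because the substitution into a first-order system, the M-matrix verification, and the appeal to Lemma~\ref{lemma2} have all already been carried out in the proof of Theorem~\ref{th3}.
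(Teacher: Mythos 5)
Your proposal is correct and is precisely the intended argument: the paper states this corollary without a separate proof because it is exactly the specialization of Theorem~\ref{th3} to constant coefficients, where $A=a$ and $B_k=b_k$ collapse condition 1) to $\sum_{k=1}^m b_k\le a^2/4$, $\sum_{k=1}^m b_k(1-a\tau_k)>0$ and condition 2) to $\sum_{k=1}^m b_k\ge a^2/4$, $\sum_{k=1}^m b_k(1+a\tau_k)<a^2/2$. Your algebraic reductions in both cases are accurate, so nothing further is needed.
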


\begin{Ex}
Consider the equation
 \begin{equation}\label{14}
\ddot{x}(t)+a\dot{x}(t)+bx(t-h|\sin t|)=0.
\end{equation}
To illustrate  Theorem~\ref{th3}, we consider numerical examples:\\
a) $a=2$, $b=0.9$, $h=0.4$. Condition 1) of Theorem~\ref{th3} holds, condition 2) does not hold. Equation (\ref{14})
is asymptotically stable.\\
b) $a=2$, $b=1.1$,  $h=0.4$. Condition 2) of Theorem~\ref{th3} holds, condition 1) does not hold. Equation (\ref{14})
is asymptotically stable.\\
c) $a=1$, $b=1.1$, $h=2.5$. Conditions of Theorem~\ref{th3}  do not hold. Equation (\ref{13})
is unstable which can be confirmed numerically.

\end{Ex}

Consider the equation
\begin{equation}\label{15}
\ddot{x}(t)+a(t)\dot{x}(t)+b(t)x(t)=\sum_{k=1}^m c_k(t) \left[ x(t)-ix(h_k(t)) \right],
\end{equation}
where $0<a\leq a(t)\leq A,~ 0<b\leq b_k(t)\leq B,~ |c_k(t)|\leq C_k,~ t-h_k(t)\leq \tau_k$.

\begin{Th}\label{th4} Suppose at least one of the following conditions holds:\\
1) $\displaystyle B\leq\frac{a^2}{4},~ \sum_{k=1}^m C_k\tau_k<\frac{2b-a(A-a)}{2a}$,\\
2) $\displaystyle b\geq\frac{a}{2}\left(A-\frac{a}{2}\right),~ \sum_{k=1}^m C_k\tau_k<\frac{a^2-2B}{2a}$.\\
Then equation (\ref{15}) is exponentially stable.
\end{Th}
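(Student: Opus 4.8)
The plan is to follow the same substitution-plus-M-matrix scheme used in Theorems~\ref{th1}--\ref{th3}, but now routing the difference terms through the integral form of Lemma~\ref{lemma2}. The crucial preliminary observation is that since $\dot{x}$ is locally absolutely continuous, $x(t)-x(h_k(t))=\int_{h_k(t)}^t \dot{x}(s)\,ds$, so the forcing on the right-hand side of (\ref{15}) is naturally an integral of the derivative. First I would apply the substitution $\dot{x}=-\frac{a}{2}x+y$, $\ddot{x}=-\frac{a}{2}\dot{x}+\dot{y}$ to (\ref{15}) to obtain a first-order system in $(x,y)$; the $y$-equation will acquire an instantaneous term $\left[\frac{a}{2}\left(a(t)-\frac{a}{2}\right)-b(t)\right]x(t)$, a diagonal term $-\left(a(t)-\frac{a}{2}\right)y(t)$, and the difference terms $\sum_k c_k(t)\int_{h_k(t)}^t\left[-\frac{a}{2}x(s)+y(s)\right]ds$ after replacing $\dot{x}(s)$ by $-\frac{a}{2}x(s)+y(s)$. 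This is exactly system (\ref{system_2}).

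Next I would read off the constants needed for Lemma~\ref{lemma2}, taking $\alpha_1=\alpha_2=\frac{a}{2}$ (using $a(t)-\frac{a}{2}\geq\frac{a}{2}$ for the $y$-diagonal). The integral terms contribute $\left|-\frac{a}{2}c_k(t)\right|\leq\frac{a}{2}C_k$ for the $x$-entries and $|c_k(t)|\leq C_k$ for the $y$-entries, each with lag $\tau_k$, so the $(2,1)$ and $(2,2)$ positions pick up $\frac{a}{2}\sum_k C_k\tau_k$ and $\sum_k C_k\tau_k$ respectively. The decisive step is to pin down the sign and magnitude of the instantaneous coefficient $\frac{a}{2}\left(a(t)-\frac{a}{2}\right)-b(t)$. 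Under condition 1) ($B\leq a^2/4$) this quantity is nonnegative and bounded above by $\frac{a}{2}\left(A-\frac{a}{2}\right)-b$; under condition 2) ($b\geq\frac{a}{2}\left(A-\frac{a}{2}\right)$) it is nonpositive with absolute value at most $B-\frac{a^2}{4}$, and condition 2) forces $B\geq a^2/4$, so this bound is legitimate. In each case this determines the bound $C_{21}$ entering the M-matrix.

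I would then form the $2\times 2$ candidate matrix and verify it is an M-matrix. For condition 1) the matrix is
$$
\left(\begin{array}{cc}
1 & -\frac{2}{a}\\
-\frac{2}{a}\left[\frac{a}{2}\left(A-\frac{a}{2}\right)-b+\frac{a}{2}\sum_{k=1}^m C_k\tau_k\right] & 1-\frac{2}{a}\sum_{k=1}^m C_k\tau_k
\end{array}\right),
$$
and for condition 2) the $(2,1)$ bracket is replaced by $B-\frac{a^2}{4}+\frac{a}{2}\sum_{k=1}^m C_k\tau_k$. The off-diagonal entries are nonpositive by the sign analysis above, and computing the determinant collapses the M-matrix requirement to precisely $\sum_k C_k\tau_k<\frac{2b-a(A-a)}{2a}$ in case 1) and $\sum_k C_k\tau_k<\frac{a^2-2B}{2a}$ in case 2); positivity of the $(2,2)$ entry follows because each of these bounds is smaller than $a/2$. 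Lemma~\ref{lemma2} then yields exponential stability of (\ref{15}).

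The main obstacle I anticipate is the bookkeeping around the sign of the instantaneous coefficient: one must argue separately in the two regimes that $\frac{a}{2}\left(a(t)-\frac{a}{2}\right)-b(t)$ does not change sign, so that its absolute-value bound $C_{21}$ is correctly identified, and confirm that the off-diagonal $(2,1)$ entry genuinely stays nonpositive after adding the integral contribution. Once the signs are fixed, the determinant computation is routine and reproduces the stated thresholds exactly.
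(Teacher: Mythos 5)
Your proposal is correct and is essentially the paper's own argument: the paper proves Theorem~\ref{th4} by rewriting $x(t)-x(h_k(t))=\int_{h_k(t)}^t \dot{x}(s)\,ds$ and then repeating the substitution-and-M-matrix scheme of Theorem~\ref{th2}, which is exactly what you do, with the same candidate matrices and thresholds. Your explicit appeal to Lemma~\ref{lemma2} (rather than Lemma~\ref{lemma1}) is the right way to handle the integral terms and merely makes precise what the paper leaves implicit in its one-line proof.
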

\begin{proof}
After rewriting equation~(\ref{15}) in the form
$$
\ddot{x}(t)+a(t)\dot{x}(t)+b(t)x(t)=\sum_{k=1}^m c_k(t)\int_{h_k(t)}^t \dot{x}(s)ds,
$$
we apply the same argument as in the proof of Theorem~\ref{th2}.
\end{proof}

Theorem~\ref{th1} gives delay-independent stability conditions for equation~(\ref{7}).
The following statement contains delay-dependent stability conditions for this equation.

\begin{Th}\label{th5}
Assume that
$$
0<a\leq a(t)\leq A, 0<b\leq b(t)+\sum_{k=1}^m c_k(t)\leq B, |c_k(t)|\leq C_k, t-h_k(t)\leq \tau_k
$$
and at least one of the conditions of Theorem~\ref{th4} holds.
Then equation (\ref{7}) is exponentially stable.
\end{Th}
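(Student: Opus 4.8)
The plan is to reduce equation~(\ref{7}) to the already-analyzed form~(\ref{15}) so that Theorem~\ref{th4} applies directly. The contrast with Theorem~\ref{th1} is the guiding idea: there the coefficient $b(t)$ alone carries the restoring force and each delayed term $c_k(t)x(h_k(t))$ is treated as a genuine perturbation that must be small in sum (the condition involves $\sum_k C_k$); here I would instead absorb the \emph{instantaneous} part of those terms into the restoring coefficient and keep only the \emph{delayed deviation} as a perturbation. This is why the present hypotheses bound $b(t)+\sum_k c_k(t)$ rather than $b(t)$, and why they control the products $\sum_k C_k\tau_k$ rather than $\sum_k C_k$, producing a delay-dependent test complementary to the delay-independent Theorem~\ref{th1}.

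First I would carry out the algebraic manipulation that realizes this idea. Adding and subtracting $\sum_{k=1}^m c_k(t)x(t)$ in~(\ref{7}) gives
\begin{equation}
\ddot{x}(t)+a(t)\dot{x}(t)+\left[b(t)+\sum_{k=1}^m c_k(t)\right]x(t)=\sum_{k=1}^m c_k(t)\left[x(t)-x(h_k(t))\right]. \nonumber
\end{equation}
This is exactly equation~(\ref{15}) with the coefficient of $x(t)$ replaced by $\tilde{b}(t):=b(t)+\sum_{k=1}^m c_k(t)$, while the damping $a(t)$, the coefficients $c_k(t)$, the delays $h_k(t)$, and their respective bounds $A$, $C_k$, $\tau_k$ are unchanged.

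Next I would verify that the hypotheses of Theorem~\ref{th4} hold for this rewritten equation. The requirement in~(\ref{15}) that the coefficient of $x(t)$ lie in $[b,B]$ with $b>0$ is precisely the standing assumption $0<b\le b(t)+\sum_{k=1}^m c_k(t)\le B$ imposed in Theorem~\ref{th5}; the bounds $0<a\le a(t)\le A$, $|c_k(t)|\le C_k$, and $t-h_k(t)\le\tau_k$ are likewise assumed. Since, by hypothesis, at least one of the two numerical conditions of Theorem~\ref{th4} holds with the same constants $a,A,B,C_k,\tau_k$, Theorem~\ref{th4} applies verbatim and yields exponential stability of the rewritten equation, hence of~(\ref{7}).

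The argument is essentially bookkeeping, so I expect no genuine analytic obstacle; the one point to get right is the identification of $\tilde{b}(t)=b(t)+\sum_k c_k(t)$ as the new restoring coefficient and the ensuing recognition that Theorem~\ref{th5} constrains $\tilde{b}$ together with the products $C_k\tau_k$, rather than $b(t)$ and $\sum_k C_k$ separately. Getting this matching exactly right — in particular checking that the sign conventions and the bounds transfer cleanly when $\sum_k c_k(t)$ is folded into the diagonal coefficient — is the only place where care is needed before simply citing Theorem~\ref{th4}.
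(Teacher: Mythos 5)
Your proposal is correct and takes essentially the same approach as the paper: the paper's proof also rewrites equation~(\ref{7}) by absorbing $\sum_{k=1}^m c_k(t)$ into the coefficient of $x(t)$, obtaining $\ddot{x}(t)+a(t)\dot{x}(t)+\bigl(b(t)+\sum_{k=1}^m c_k(t)\bigr)x(t)=\sum_{k=1}^m c_k(t)\int_{h_k(t)}^t \dot{x}(s)\,ds$, which is precisely your rewritten equation~(\ref{15}) with the difference expressed as an integral. Your citation of Theorem~\ref{th4} as a black box is just a cleaner packaging of the paper's closing remark that the rest "is a straightforward imitation of the proof of Theorem~\ref{th2}," since that imitation is exactly what the proof of Theorem~\ref{th4} consists of.
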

\begin{proof}
Rewrite equation~(\ref{7}) in the form
$$
\ddot{x}(t)+a(t)\dot{x}(t)+\left(b(t)+\sum_{k=1}^m c_k(t)\right)x(t)=\sum_{k=1}^m c_k(t)\int_{h_k(t))}^t \dot{x}(s)ds.
$$
The end of the proof is a straightforward imitation of the proof of Theorem~\ref{th2}.
\end{proof}

\section{Stability tests for nonlinear Lienard equations}

In this section we examine  several nonlinear delay differential equations of the second order which have the following general form
\begin{equation}\label{16}
\ddot{x}(t)+\sum_{k=1}^m f_k(t,x(p_k(t)),\dot{x}(g_k(t)))+\sum_{k=1}^l s_k(t,x(h_k(t)))=0,
\end{equation}
with the following initial function
\begin{equation}\label{17}
x(t)=\varphi(t),\dot{x}(t)=\psi(t), t\leq t_0,~ t_0\geq 0
\end{equation}
where $f_k(t,u_1,u_2), k=1,\dots,m,~ s_k(t,u),$ are Caratheodory functions which are
measurable in $t$ and continuous in all the other arguments,
condition (a2) holds for delay functions  $p_k, g_k, h_k$; $\varphi$ and $\psi$ are Borel measurable bounded functions.

The definition of the solution of the initial value problem (\ref{16})-(\ref{17}) is the same as for problem (\ref{general}), (\ref{in}).
We will assume that the initial value problem has a unique global solution on $[t_0,\infty)$ for all nonlinear equations considered in this section.

{\bf
\begin{Def}
Suppose the number $K$ is an equilibrium of equation (\ref{16}). We will say that
$K$ is {\em an attractor} of this equation if for any solution $x$ of the problem
(\ref{16}), (\ref{17}) we have $\lim_{t\rightarrow\infty}x(t)=K$.
\end{Def}
}

\begin{Th}
 Consider the equation
\begin{equation}\label{19}
\ddot{x}(t)+f(t,x(t),\dot{x}(t))+s(t,x(t))+\sum_{k=1}^m s_k(t,x(t),x(h_k(t)))=0,
\end{equation}
where
$$
f(t,v,0)=0, s(t,0)=0, s_k(t,v,0)=0, 0<a_0\leq \frac{f(t,v,u)}{u}\leq A,
$$$$
0<b_0\leq \frac{s(t,u)}{u}\leq B, \left| \frac{s_k(t,v,u)}{u} \right| \leq C_k, u\neq 0,~t-h_k(t)\leq \tau.
$$
If at least one of the following conditions holds:\\
1) $\displaystyle B\leq \frac{a_0^2}{4},~ \sum_{k=1}^m C_k<b_0-\frac{a_0}{2}(A-a_0)$,\\
2) $\displaystyle b_0\geq \frac{a_0}{2}\left(A-\frac{a_0}{2}\right),~ \sum_{k=1}^m C_k<\frac{a_0^2}{2}-B$,\\
then zero is a global  attractor for all solutions of problem~(\ref{19}), (\ref{17}).
\end{Th}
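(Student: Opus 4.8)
The plan is to reduce the nonlinear problem to the linear equation (\ref{7}) \emph{along each individual solution} and then invoke Theorem~\ref{th1}. Fix an arbitrary solution $x$ of (\ref{19}), (\ref{17}); by assumption it exists on $[t_0,\infty)$ and has a locally absolutely continuous derivative, so $\dot{x}$ is measurable. The idea is that, evaluated on this fixed trajectory, each nonlinearity is a measurable multiple of the corresponding argument, so $x$ itself solves a linear equation of the form (\ref{7}) whose coefficients obey precisely the bounds required in Theorem~\ref{th1}.

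Concretely, I would set
$$
a(t)=\frac{f(t,x(t),\dot{x}(t))}{\dot{x}(t)},\quad b(t)=\frac{s(t,x(t))}{x(t)},\quad c_k(t)=\frac{s_k(t,x(t),x(h_k(t)))}{x(h_k(t))}
$$
on the sets where $\dot{x}(t)\neq 0$, $x(t)\neq 0$, $x(h_k(t))\neq 0$ respectively, and extend them by $a(t):=a_0$, $b(t):=b_0$, $c_k(t):=0$ on the complementary (measurable) sets. Since $f,s,s_k$ are Caratheodory and $x,\dot{x}$ are measurable, each of these functions is measurable; the hypotheses give $a_0\leq a(t)\leq A$, $b_0\leq b(t)\leq B$, $|c_k(t)|\leq C_k$ wherever the denominators are nonzero, and the chosen extensions respect the same bounds, so these estimates hold for all $t$. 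The key point is that the vanishing conditions $f(t,v,0)=0$, $s(t,0)=0$, $s_k(t,v,0)=0$ guarantee the identities $f(t,x(t),\dot{x}(t))=a(t)\dot{x}(t)$, $s(t,x(t))=b(t)x(t)$, $s_k(t,x(t),x(h_k(t)))=c_k(t)x(h_k(t))$ for almost every $t$, including on the zero sets, where both sides vanish. Substituting these into (\ref{19}) shows that $x$ satisfies
$$
\ddot{x}(t)+a(t)\dot{x}(t)+b(t)x(t)+\sum_{k=1}^m c_k(t)x(h_k(t))=0,
$$
which is exactly equation (\ref{7}) with $0<a_0\leq a(t)\leq A$, $0<b_0\leq b(t)\leq B$, $|c_k(t)|\leq C_k$, $t-h_k(t)\leq\tau$.

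Finally, conditions 1) and 2) of the present statement are literally conditions 1) and 2) of Theorem~\ref{th1} with $a,b$ replaced by $a_0,b_0$, so its hypotheses are met for this linear equation. Hence equation (\ref{7}) with these coefficients is exponentially stable, and in particular the solution $x$, being one of its solutions, satisfies $x(t)\to 0$ as $t\to\infty$; as $x$ was arbitrary, zero is a global attractor for (\ref{19}), (\ref{17}). I expect the only genuine obstacle to be the measurability and the correct treatment of the zero sets of $\dot{x}$, $x$ and $x(h_k(\cdot))$: the vanishing assumptions on $f,s,s_k$ are exactly what make the linear identification valid there, and everything else follows by direct appeal to the already-established Theorem~\ref{th1} (via Lemma~\ref{lemma1}).
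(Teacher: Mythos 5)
Your proposal is correct and follows essentially the same route as the paper: fixing a solution, defining $a(t)$, $b(t)$, $c_k(t)$ along that trajectory with exactly the same piecewise extensions on the zero sets of $\dot{x}$, $x$, and $x(h_k(\cdot))$, and then invoking Theorem~\ref{th1} for the resulting linear equation of form (\ref{7}). Your explicit attention to measurability and to why the vanishing conditions $f(t,v,0)=0$, $s(t,0)=0$, $s_k(t,v,0)=0$ make the linear identification valid almost everywhere is a welcome elaboration of details the paper leaves implicit.
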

\begin{proof}
{\bf Suppose $x$ is a fixed solution of problem~(\ref{19}), (\ref{17}).}
Rewrite equation~(\ref{19}) in the form
$$
\ddot{x}(t)+a(t)\dot{x}(t)+b(t)x(t)+\sum_{k=1}^m c_k(t)x(h_k(t))=0,
$$
where
$\displaystyle
a(t)= \left\{\begin{array}{cc}
\frac{f(t,x(t),\dot{x}(t))}{\dot{x}(t)},& \dot{x}(t)\neq 0,\\
a_0,& \dot{x}(t)=0,\end{array}\right. ~~
b(t)=\left\{\begin{array}{cc}
\frac{s(t,x(t))}{x(t)},& x(t)\neq 0,\\
b_0,& x(t)=0,\end{array}\right.
$ \\ $\displaystyle
c_k(t)= \left\{\begin{array}{cc}
\frac{s_k(t,x(t),x(h_k(t)))}{x(h_k(t))},& x(h_k(t))\neq 0,\\
0,&  x(h_k(t))=0.\end{array}\right.
$

Hence the function $x$ is a solution of the linear equation
\begin{equation}\label{20}
\ddot{y}(t)+a(t)\dot{y}(t)+b(t)y(t)+\sum_{k=1}^m c_k(t)y(h_k(t))=0,
\end{equation}
which is exponentially stable by  Theorem~\ref{th1}.
Thus for any solution
$y$ of equation (\ref{20}) we have $\lim\limits_{t\rightarrow\infty}y(t)=0$. Since $x$ is a solution of (\ref{20}),
we have  $\lim\limits_{t\rightarrow\infty}x(t)=0.$
\end{proof}

The previous proof is readily adapted to the proof of the following theorems.
\begin{Th}\label{th3.3}
Consider the equation
\begin{equation}\label{21}
\ddot{x}(t)+f(t,x(t),\dot{x}(t))+s(t,x(t))+\sum_{k=1}^m s_k(t,x(t),\dot{x}(h_k(t)))=0,
\end{equation}
where
$$
f(t,v,0)=0,~ s(t,0)=0,~ s_k(t,v,0)=0,~ 0<a_0\leq \frac{f(t,v,u)}{u}\leq A,
$$
$$
0<b_0\leq \frac{s(t,u)}{u}\leq B, ~ \left|\frac{s_k(t,v,u)}{u}\right|\leq C_k, u\neq 0,~t-h_k(t)\leq \tau.
$$
Suppose at least one of the following conditions holds:\\
1) $\displaystyle B\leq \frac{a_0^2}{4},~ \sum_{k=1}^m C_k<\frac{2b_0-a_0(A-a_0)}{2a_0}$,\\
2) $\displaystyle b_0 \geq \frac{a_0}{2}\left(A-\frac{a_0}{2}\right),~ \sum_{k=1}^m
C_k<\frac{a_0^2-2B}{2a_0}$.\\
Then zero is a global  attractor for all solutions of problem~(\ref{21}),(\ref{17}).
\end{Th}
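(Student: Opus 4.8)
The plan is to mimic the proof of the preceding theorem (for equation (\ref{19})), replacing the appeal to Theorem~\ref{th1} by one to Theorem~\ref{th2}, since here the delayed nonlinearity $s_k$ depends on the delayed derivative $\dot{x}(h_k(t))$ rather than on the delayed state. First I would invoke the standing assumption that problem (\ref{21}),(\ref{17}) has a unique global solution, fix such a solution $x$, and linearize along it by setting
$$
a(t)=\begin{cases}\dfrac{f(t,x(t),\dot{x}(t))}{\dot{x}(t)},&\dot{x}(t)\neq 0,\\ a_0,&\dot{x}(t)=0,\end{cases}\qquad
b(t)=\begin{cases}\dfrac{s(t,x(t))}{x(t)},&x(t)\neq 0,\\ b_0,&x(t)=0,\end{cases}
$$
together with
$$
c_k(t)=\begin{cases}\dfrac{s_k(t,x(t),\dot{x}(h_k(t)))}{\dot{x}(h_k(t))},&\dot{x}(h_k(t))\neq 0,\\ 0,&\dot{x}(h_k(t))=0.\end{cases}
$$
The point to stress is that the quotient defining $c_k$ now uses the delayed derivative $\dot{x}(h_k(t))$, so that $\sum_k c_k(t)\dot{x}(h_k(t))=\sum_k s_k(t,x(t),\dot{x}(h_k(t)))$; when $\dot{x}(h_k(t))=0$ this identity still holds because $s_k(t,v,0)=0$, which is exactly why the value $0$ is the consistent choice.

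With these coefficients the fixed solution $x$ satisfies the linear delay equation
$$
\ddot{x}(t)+a(t)\dot{x}(t)+b(t)x(t)+\sum_{k=1}^m c_k(t)\dot{x}(h_k(t))=0,
$$
which has precisely the form (\ref{10}). The hypotheses give at once $0<a_0\leq a(t)\leq A$, $0<b_0\leq b(t)\leq B$, $|c_k(t)|\leq C_k$ and $t-h_k(t)\leq\tau$, so the linearized equation meets the structural requirements under which (\ref{10}) was studied. Moreover, conditions 1) and 2) of the present theorem are literally conditions 1) and 2) of Theorem~\ref{th2} with $a$ and $b$ replaced by the lower bounds $a_0$ and $b_0$. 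Hence Theorem~\ref{th2} applies and yields exponential stability of (\ref{10}) with these coefficients; in particular every solution $y$ obeys $\lim_{t\to\infty}y(t)=0$. Since $x$ is one such solution, $\lim_{t\to\infty}x(t)=0$, and as $x$ was an arbitrary solution of (\ref{21}),(\ref{17}), zero is a global attractor.

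The argument is essentially a transcription of the previous proof, so no genuinely new difficulty arises; the only point requiring care is the bookkeeping around the linearized coefficients. Specifically, one must confirm that $a$, $b$ and the $c_k$ are Lebesgue measurable and essentially bounded, so that assumption (a1) of the linear theory is met, and that the case distinction at the zeros of $\dot{x}$, $x$ and $\dot{x}(h_k(t))$ leaves the equation genuinely unchanged. The subtle step relative to the preceding theorem is recognizing that, because the delay enters through $\dot{x}(h_k(t))$, the correct reduction is to (\ref{10}) and hence to Theorem~\ref{th2} rather than to (\ref{7}) and Theorem~\ref{th1}; this is precisely why the stability thresholds here carry an extra factor $1/a_0$ compared with the state-delay case.
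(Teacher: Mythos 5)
Your proposal is correct and is essentially the paper's own argument: the paper proves Theorem~\ref{th3.3} by remarking that the proof for equation~(\ref{19}) is ``readily adapted,'' and your adaptation --- linearizing along a fixed solution with $c_k(t)=s_k(t,x(t),\dot{x}(h_k(t)))/\dot{x}(h_k(t))$ (and $0$ at zeros of $\dot{x}(h_k(t))$), reducing to the form~(\ref{10}), and invoking Theorem~\ref{th2} with $a=a_0$, $b=b_0$ --- is exactly the intended one. You also correctly identified the one nontrivial point, namely that the delayed-derivative structure forces the reduction to~(\ref{10})/Theorem~\ref{th2} rather than~(\ref{7})/Theorem~\ref{th1}.
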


\begin{Th}\label{th3.4}
 Consider the equation
\begin{equation}\label{22}
\ddot{x}(t)+f(t,x(t),\dot{x}(t))+\sum_{k=1}^m s_k(t,x(h_k(t)),\dot{x}(t))=0,
\end{equation}
where
$$
f(t,v,0)=0,  s_k(t,0,u)=0, 0<a_0\leq \frac{f(t,v,u)}{u}\leq A,
$$$$
 0<b_k\leq \frac{s_k(t,v,u)}{v}\leq B_k, u\neq 0,~ t-h_k(t)\leq \tau.
$$

Suppose at least one of the following conditions holds:\\
1) $\displaystyle \sum_{k=1}^m B_k\leq \frac{a_0^2}{4},~ \frac{a_0}{2}(A-a_0)<\sum_{k=1}^m b_k-a_0\sum_{k=1}^m B_k\tau_k$,\\
2) $\displaystyle \sum_{k=1}^m b_k\geq\frac{a}{2}\left(A-\frac{a_0}{2}\right),~ \sum_{k=1}^m
B_k(1+a_0\tau_k)<\frac{a_0^2}{2}$.\\
Then zero is a global  attractor for all solutions of problem~(\ref{22}),(\ref{17}).
\end{Th}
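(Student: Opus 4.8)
The plan is to linearize equation~(\ref{22}) along a fixed solution, exactly as in the proofs of the preceding nonlinear theorems, and then invoke the stability result for the linear equation~(\ref{12}), namely Theorem~\ref{th3}. First I would fix a global solution $x$ of problem~(\ref{22}),~(\ref{17}) (whose existence is assumed) and define time-varying coefficients by dividing each nonlinear term by the appropriate linear variable. Concretely, set
$$
a(t)=\begin{cases}\dfrac{f(t,x(t),\dot{x}(t))}{\dot{x}(t)},&\dot{x}(t)\neq 0,\\[1mm] a_0,&\dot{x}(t)=0,\end{cases}\qquad
b_k(t)=\begin{cases}\dfrac{s_k(t,x(h_k(t)),\dot{x}(t))}{x(h_k(t))},&x(h_k(t))\neq 0,\\[1mm] b_k,&x(h_k(t))=0,\end{cases}
$$
so that $f(t,x(t),\dot{x}(t))$ becomes $a(t)\dot{x}(t)$ and each $s_k(t,x(h_k(t)),\dot{x}(t))$ becomes $b_k(t)x(h_k(t))$, using the structural hypotheses $f(t,v,0)=0$ and $s_k(t,0,u)=0$ to guarantee consistency at the points where the denominators vanish.

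Next I would check that these coefficients meet the bounds required by Theorem~\ref{th3}. The hypothesis $0<a_0\le f(t,v,u)/u\le A$ gives $0<a_0\le a(t)\le A$, and $0<b_k\le s_k(t,v,u)/v\le B_k$ gives $0<b_k\le b_k(t)\le B_k$; measurability of $a(t)$ and $b_k(t)$ follows from the Carath\'eodory assumptions on $f$ and $s_k$ together with the continuity of $x$ and the absolute continuity of $\dot{x}$. With these coefficients, $x$ is by construction a solution of the linear non-autonomous equation
$$
\ddot{y}(t)+a(t)\dot{y}(t)+\sum_{k=1}^m b_k(t)y(h_k(t))=0,
$$
which is precisely equation~(\ref{12}) under the standing bounds $0<a_0\le a(t)\le A$, $0<b_k\le b_k(t)\le B_k$, $t-h_k(t)\le\tau_k$.

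Finally, I would observe that conditions~1) and~2) of the present theorem coincide with conditions~1) and~2) of Theorem~\ref{th3} (with the constant $a$ there played by $a_0$). Hence Theorem~\ref{th3}, which itself rests on the $M$-matrix criterion of Lemma~\ref{lemma2}, guarantees that equation~(\ref{12}) is exponentially stable, so \emph{every} solution of it tends to zero as $t\to\infty$. Since $x$ is one such solution, $\lim_{t\to\infty}x(t)=0$, and as the fixed solution was arbitrary, zero is a global attractor for problem~(\ref{22}),~(\ref{17}).

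I expect the only point requiring genuine care to be the verification that the frozen coefficients stay within the admissible ranges and remain measurable: the essential subtlety is that each $s_k$ also depends on $\dot{x}(t)$, so one must use that the bound $b_k\le s_k(t,v,u)/v\le B_k$ is uniform in the second argument $u$, ensuring $b_k(t)$ is correctly controlled regardless of the value of $\dot{x}(t)$ along the trajectory. Once this and the piecewise definitions on the zero sets of $\dot{x}$ and of $x\circ h_k$ are dispatched, the reduction to Theorem~\ref{th3} is immediate.
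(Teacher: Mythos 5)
Your proposal is correct and is essentially the paper's own argument: the paper proves Theorem~\ref{th3.4} by stating that the linearization proof of the preceding nonlinear theorem "is readily adapted," and your write-up carries out exactly that adaptation, freezing the coefficients along a fixed solution and invoking Theorem~\ref{th3}. You even handle the one point the adaptation genuinely requires care with, namely setting $b_k(t)=b_k>0$ (rather than $0$) on the set where $x(h_k(t))=0$, so that the lower bound $0<b_k\le b_k(t)$ demanded by Theorem~\ref{th3} is preserved.
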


\begin{Th}\label{th3.5}
 Consider the equation
\begin{equation}\label{23}
\ddot{x}(t)+f(t,x(t),\dot{x}(t))+s(t,x(t))=\sum_{k=1}^m c_k(t)(x(t)-x(h_k(t))),
\end{equation}
where
$$
f(t,v,0)=0, s(t,0)=0, 0<a_0\leq \frac{f(t,v,u)}{u}\leq A,
$$$$
0<b_0\leq \frac{s(t,u)}{u}\leq B,~ |c_k(t)|\leq C_k,~ u\neq 0,~t-h_k(t)\leq \tau_k.
$$
Suppose at least one of the following conditions holds:\\
1) $\displaystyle B\leq \frac{a_0^2}{4},~ \sum_{k=1}^m C_k\tau_k<\frac{2b_0-a_0(A-a_0)}{2a_0}$,\\
2) $\displaystyle b_0 \geq \frac{a_0}{2}\left(A-\frac{a_0}{2}\right),~ \sum_{k=1}^m
C_k\tau_k<\frac{a_0^2-2B}{2a_0}$.\\
Then zero is a global  attractor for all solutions of problem~(\ref{23}),(\ref{17}).
\end{Th}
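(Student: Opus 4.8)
The plan is to reuse, essentially verbatim, the linearization device that handled equation~(\ref{19}): along a single fixed solution I freeze the two nonlinearities $f$ and $s$ into bounded time-varying coefficients, so that the chosen solution is simultaneously a solution of a \emph{linear} equation already in the shape of~(\ref{15}); exponential stability of that linear equation is then supplied by Theorem~\ref{th4}, and global attractivity of zero follows because the solution was arbitrary. The simplification relative to the earlier nonlinear theorems is that the right-hand side of~(\ref{23}) already carries genuine linear coefficients $c_k(t)$, so no extra substitution of the delayed terms is required — the linearization only touches $f$ and $s$.

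Concretely, I would first invoke the standing hypothesis that problem~(\ref{23}),(\ref{17}) has a unique global solution $x$ on $[t_0,\infty)$ and fix such a solution. Next I introduce the frozen coefficients
$$
a(t)=\begin{cases}\dfrac{f(t,x(t),\dot{x}(t))}{\dot{x}(t)}, & \dot{x}(t)\neq 0,\\ a_0, & \dot{x}(t)=0,\end{cases}
\qquad
b(t)=\begin{cases}\dfrac{s(t,x(t))}{x(t)}, & x(t)\neq 0,\\ b_0, & x(t)=0,\end{cases}
$$
exactly as in the proof of the theorem on equation~(\ref{19}), and keep the given $c_k(t)$ unchanged. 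With these definitions the fixed $x$ satisfies the linear equation~(\ref{15}) identically. The hypotheses $0<a_0\le f(t,v,u)/u\le A$ and $0<b_0\le s(t,u)/u\le B$ force $a_0\le a(t)\le A$ and $b_0\le b(t)\le B$ wherever the denominators are nonzero, and the values assigned at the zeros of $\dot{x}$ and $x$ lie in the very same intervals; together with $|c_k(t)|\le C_k$ and $t-h_k(t)\le\tau_k$ this produces precisely the coefficient bounds demanded in~(\ref{15}). Since conditions 1) and 2) of the present statement are literally conditions 1) and 2) of Theorem~\ref{th4} under the identification $a\mapsto a_0$, $b\mapsto b_0$, the hypotheses transfer directly and Theorem~\ref{th4} applies.

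The one point deserving care — and the only place where the argument is not purely mechanical — is checking that $a(t)$ and $b(t)$ are admissible coefficients for Theorem~\ref{th4}, i.e. Lebesgue measurable and essentially bounded, despite being defined piecewise through quotients that could a priori blow up near zeros of $\dot{x}$ or $x$. This is exactly where the imposed two-sided bounds on $f(t,v,u)/u$ and $s(t,u)/u$ do the work: they confine both quotients to $[a_0,A]$ and $[b_0,B]$ respectively on the sets where the denominators are nonzero, so no blow-up occurs and essential boundedness is automatic; measurability follows from the Carathéodory structure of $f,s$ and the continuity of $x,\dot{x}$. Once this is settled, Theorem~\ref{th4} yields exponential stability of~(\ref{15}), so every solution of~(\ref{15}), and in particular our fixed $x$, satisfies $\lim_{t\to\infty}x(t)=0$. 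As $x$ was an arbitrary solution of~(\ref{23}),(\ref{17}), zero is a global attractor, which completes the plan.
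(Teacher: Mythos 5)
Your proposal is correct and follows essentially the same route as the paper: the paper's proof of Theorem~\ref{th3.5} is exactly the remark that the linearization argument given for equation~(\ref{19}) is ``readily adapted'' --- freeze $f$ and $s$ along a fixed solution, observe that this solution then satisfies the linear equation~(\ref{15}) with coefficients bounded by $a_0\le a(t)\le A$, $b_0\le b(t)\le B$, $|c_k(t)|\le C_k$, and invoke Theorem~\ref{th4}. Your extra check that the frozen coefficients are measurable and essentially bounded is a detail the paper leaves implicit; it does not alter the argument.
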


\begin{Ex}
To illustrate Part 2) of Theorem~\ref{th3.4}, consider the equation
\begin{equation}\label{ex1}
\ddot{x}(t)+(1.9+0.1\sin x(t))\dot{x}(t)+
(1.1+0.1\cos x(t))x(t-0.19 \sin^2 t)=0.
\end{equation}
We have
$m=1$, $a_0=1.8$, $A=2$, $b_0=1$, $B=1.2$, $\tau=0.19$;
therefore, all conditions of the theorem hold, hence  zero is a global  attractor for all solutions of equation~(\ref{ex1}).
\end{Ex}

Motivated by model \eqref{Busn}, consider a generalized  Kaldor-Kalecki model
\begin{equation}\label{ec1}
\ddot{x}(t)+ \left[ \alpha(t)-\beta(t)p'(x(t)) \right] \dot{x}(t)+s(t,x(t))=p(x(t))-p(x(h(t))),
\end{equation}
where $\alpha,\beta$ are locally essentially bounded functions, $s$ is a Caratheodory function,
$p$ is a locally absolutely continuous nondecreasing function,
$$
0<\alpha_0\leq \alpha(t)\leq \alpha_1,~ 0<\beta_0\leq \beta(t)\leq \beta_1,
$$$$
|p'(t)|\leq C,~ \alpha_0-\beta_1 C>0,~ 0<b_0\leq \frac{s(t,u)}{u}\leq B,~ t-h(t)\leq \tau.
$$
Denote $a_0=\alpha_0-\beta_1 C$.
\begin{Th}\label{th4.1}
Suppose at least one of the following conditions holds:\\
1) $\displaystyle B\leq \frac{a_0^2}{4},~ C\tau<\frac{2b_0-a_0(\alpha_1-a_0)}{2a_0}$,\\
2) $\displaystyle b \geq \frac{a_0}{2}\left(\alpha_1-\frac{a_0}{2}\right),~ C\tau<\frac{a_0^2-2B}{2a_0}$.\\
Then zero is a global  attractor for all solutions of problem~(\ref{ec1}),(\ref{17}).
\end{Th}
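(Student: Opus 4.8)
The plan is to follow the coefficient-freezing (linearization) strategy already employed in the proofs of Theorems~\ref{th3.3}--\ref{th3.5}: fix an arbitrary global solution $x$ of problem~(\ref{ec1})--(\ref{17}), manufacture bounded, solution-dependent coefficients so that $x$ becomes a solution of a \emph{linear} equation of the form~(\ref{15}) with $m=1$, and then invoke the linear stability test Theorem~\ref{th4}. Since $x$ is arbitrary, every solution will then decay, giving the global attractivity of zero.

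First I would freeze the damping and restoring terms. Setting
$$
a(t)=\alpha(t)-\beta(t)p'(x(t)),\qquad
b(t)=\begin{cases}\dfrac{s(t,x(t))}{x(t)},& x(t)\neq0,\\ b_0,& x(t)=0,\end{cases}
$$
the equation reads $\ddot{x}+a(t)\dot{x}+b(t)x=p(x(t))-p(x(h(t)))$, and the hypothesis on $s$ gives $b_0\le b(t)\le B$ at once. The crucial observation is the two-sided bound on $a$: because $p$ is nondecreasing we have $p'\ge0$, hence $\beta(t)p'(x(t))\ge0$ and therefore $a(t)\le\alpha(t)\le\alpha_1$; on the other hand $a(t)\ge\alpha_0-\beta_1 C=a_0>0$, using $|p'|\le C$ and $\beta(t)\le\beta_1$. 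Thus $a_0\le a(t)\le\alpha_1$, with $\alpha_1$ playing the role of $A$ in Theorem~\ref{th4}.

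Next I would convert the right-hand side into a single delayed difference. Since $|p'|\le C$, the function $p$ is globally Lipschitz with constant $C$, so defining
$$
c(t)=\begin{cases}\dfrac{p(x(t))-p(x(h(t)))}{x(t)-x(h(t))},& x(t)\neq x(h(t)),\\ 0,& x(t)=x(h(t)),\end{cases}
$$
(equivalently, applying the mean value theorem to $p$ along the fixed trajectory) yields $p(x(t))-p(x(h(t)))=c(t)\bigl(x(t)-x(h(t))\bigr)$ with $|c(t)|\le C$. Consequently $x$ is a solution of the linear equation
$$
\ddot{y}(t)+a(t)\dot{y}(t)+b(t)y(t)=c(t)\bigl(y(t)-y(h(t))\bigr),
$$
which is exactly~(\ref{15}) with $m=1$, $A=\alpha_1$, lower damping bound $a_0$, $C_1=C$ and $\tau_1=\tau$. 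Under either hypothesis 1) or 2) the corresponding inequality of Theorem~\ref{th4} holds verbatim, so this linear equation is exponentially stable; in particular all of its solutions, and hence $x$ itself, satisfy $\lim_{t\to\infty}x(t)=0$.

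The only delicate point is the upper estimate $a(t)\le\alpha_1$: it is precisely here that the monotonicity of $p$ is indispensable, for without $p'\ge0$ the $-\beta p'$ term could enlarge the effective damping coefficient past $\alpha_1$ and break the matching with the constants appearing in Theorem~\ref{th4}. Once the three bounds $a_0\le a(t)\le\alpha_1$, $b_0\le b(t)\le B$, $|c(t)|\le C$ are in place, the remainder is a routine transcription into the two inequality systems, so I anticipate no further obstruction.
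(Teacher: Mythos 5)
Your proposal is correct and follows essentially the same strategy as the paper: freezing the damping coefficient $a(t)=\alpha(t)-\beta(t)p'(x(t))$ with the same key bounds $a_0\le a(t)\le\alpha_1$ obtained from $p'\ge 0$ and $|p'|\le C$, and applying the mean value theorem to $p$ to produce $c(t)$ with $|c(t)|\le C$. The only cosmetic difference is that the paper leaves $s(t,y)$ unfrozen and cites the nonlinear Theorem~\ref{th3.5}, whereas you additionally freeze $s$ into $b(t)$ and invoke the linear Theorem~\ref{th4} directly --- which is precisely the proof of Theorem~\ref{th3.5} inlined, so the two arguments coincide in substance.
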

\begin{proof}
Suppose $x$ is a fixed solution of  problem~(\ref{ec1}),(\ref{17}).
There exists a function $\xi(t)$ such that $p(x(t))-p(h(x(t))=p'(\xi(t))(x(t)-x(h(t)))$.
Denote $\alpha(t)-\beta(t)p'(x(t))=a(t), p'(\xi(t))=c(t)$. Hence $x$ is a solution of the following equation
\begin{equation}\label{ec2}
\ddot{y}(t)+a(t)\dot{y}(t)+s(t,y(t))=c(t)(y(t)-y(h(t))).
\end{equation}
Since $p'(x)\geq 0$ then $0<\alpha_0-\beta_1 C\leq a(t)\leq \alpha_1$.
Equation~(\ref{ec2}) has a form (\ref{23}) with $f(t,x(t),\dot{x}(t))=a(t)\dot{x}(t), m=1$.
All conditions of Theorem~\ref{th3.5} hold, hence for any solution of (\ref{ec2}) we have $\lim_{t\rightarrow\infty}y(t)=0$. Then also
$\lim_{t\rightarrow\infty}x(t)=0$.
\end{proof}

\section{Sunflower model and its modifications}

The sunflower equation was introduced in 1967 by Israelson and Johnson in \cite{isa} as a model
for the geotropic circumnutations of {\em Helianthus annuus} and studied in \cite{alf,liza,somolinos}.
Historically, it was derived from the following first order delay equation
\begin{equation}
\label{Ori}
\dot{u}+\frac{b}{\tau} e^{a(1-t/\tau)}\int_{-\infty}^{t-\tau} e^{as/\tau}\sin u(s)ds=0.
\end{equation}
Taking the derivative of \eqref{Ori} we arrive at the sunflower equation
\begin{equation}
\label{Sun0}
\displaystyle  \ddot{x}+\frac{a}{ \tau} \dot{x}+\frac{b}{\tau}\, \sin x(t-\tau)=0,
\end{equation}
for which evidently the results of the previous section are not applicable.
\begin{Rem}
It is interesting to note that a non-delayed version of (\ref{Sun0})
\begin{equation}
\label{0_ode}
\displaystyle \ddot{x} +a \dot{x} +b \, \sin x(t)=0,
\end{equation}
has a long history, (see, for example, \cite{Mah}).
However, many important questions for delayed model (\ref{Sun0}) are still left unanswered.
\end{Rem}
Consider a generalization of model (\ref{Ori})
\begin{equation}
\label{add1}
~~~~~~\frac{du}{dt}+b \int_{-\infty}^{h(t)} K(t,s)\, \sin u(s)~ds =0,
\end{equation}
with the initial conditions
\begin{equation}
\label{add2}
u(t)=\varphi(t), ~ t \leq 0,
\end{equation}
under the following assumptions:\\
(b1) $h(t)\leq t-\tau$ for some $\tau > 0$;\\
(b2) $K(\cdot,\cdot)$ is  Lebesgue measurable,  $K(t,s) \geq 0$, there exists $a>0$ such that\\
  $\displaystyle K(t,s) \leq
\frac{1}{\tau} \exp\left\{ -\frac{a}{\tau} (t-s-\tau) \right\}$
and $\displaystyle \int_0^{\infty}  \int_{-\infty}^{h(t)} K(t,s)~ds~dt =\infty$;\\
(b3) $\varphi:[-\infty, 0]\to\R$ is a continuous bounded function.

\begin{Th}
\label{theorem_add1}
Suppose that (b1)-(b3) hold, $b>0$ and the characteristic equation
\begin{equation}
\label{add3}
\lambda^2 \tau - a \lambda +be^{\lambda \tau} =0
\end{equation}
has a positive root $\lambda_0>0$. Then any solution of (\ref{add1})-(\ref{add2}) with
the initial conditions satisfying either $\varphi(t) \in (2\pi k, 2\pi k +\pi)$, $k \in \N$,
or $\varphi(t) \in (2\pi k -\pi,2\pi k)$, $k \in \N$, together with
$|\varphi(t)-2\pi k| \leq \varphi(0) e^{-\lambda_0 t}$, $t<0$, tends to $2\pi k$ as $t \to \infty$.

Moreover, for $\varphi(t) \in (2\pi k, 2\pi k +\pi)$ the solution is monotone decreasing,
while for $\varphi(t) \in (2\pi k -\pi,2\pi k)$ it is monotone increasing.
\end{Th}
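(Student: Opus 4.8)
The plan is to reduce (\ref{add1}) to a single canonical case, extract the sign information that forces monotonicity, and then run a comparison against the exponential $e^{-\lambda_0 t}$ singled out by the characteristic equation (\ref{add3}).

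First I would exploit the two symmetries of the equation. Since $\sin(v+2\pi k)=\sin v$, the substitution $v=u-2\pi k$ leaves (\ref{add1}) invariant and turns the two admissible initial windows into $v\in(0,\pi)$ and $v\in(-\pi,0)$ with target $0$; and since $\sin(-v)=-\sin v$, the reflection $v\mapsto -v$ maps the second window onto the first. Hence it suffices to treat $k=0$, $v(t)=\varphi(t)\in(0,\pi)$ for $t\le 0$, and prove that $v(t)$ decreases to $0$. The monotonicity is then essentially free on this invariant region: as long as $v(s)\in(0,\pi)$ for all $s\le h(t)$, one has $\sin v(s)>0$, and since $K\ge 0$, $b>0$, and $K$ is not a.e.\ zero (by the divergence condition in (b2)), the right-hand side $-b\int_{-\infty}^{h(t)}K(t,s)\sin v(s)\,ds$ is strictly negative; thus $\dot v<0$ and $v$ is strictly decreasing wherever it stays in $(0,\pi)$. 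Being decreasing from a value below $\pi$, it never exceeds $\pi$, so the only remaining issues are that it must not fall through $0$ in finite time and that its limit is exactly $0$.

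The heart of the matter is to tie the decay to (\ref{add3}). A direct computation shows that for the extremal kernel $K^\ast(t,s)=\frac1\tau\exp\{-\frac a\tau(t-s-\tau)\}$ and $h(t)=t-\tau$ the function $e^{-\lambda_0 t}$ solves the linearized equation $\dot\phi+b\int_{-\infty}^{t-\tau}K^\ast(t,s)\phi(s)\,ds=0$ precisely because $\lambda_0$ satisfies $\lambda^2\tau-a\lambda+be^{\lambda\tau}=0$; the same relation forces $\lambda_0<a/\tau$, which is exactly the condition for the backward integral to converge. For the actual data $K\le K^\ast$ and $h(t)\le t-\tau$ the same computation turns $e^{-\lambda_0 t}$ into a supersolution of the linearized problem. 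Using $0\le\sin v\le v$ on $(0,\pi)$ together with the envelope $\varphi(t)\le\varphi(0)e^{-\lambda_0 t}$ for $t\le 0$ (which is precisely the statement that the initial history lies under $Me^{-\lambda_0 t}$ with $M=\varphi(0)$), I would sandwich the nonlinear solution between $0$ and $Me^{-\lambda_0 t}$ and let $t\to\infty$.

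The main obstacle is making this comparison rigorous, and it is genuinely delicate for two reasons. First, $\sin$ is not monotone on $(0,\pi)$, so the memory functional $v\mapsto -b\int K\sin v$ is order-preserving only while its arguments remain in $(0,\pi/2]$; a naive first-crossing argument against $Me^{-\lambda_0 t}$ does not close. I would therefore run a monotone iteration (successive approximations) started from $Me^{-\lambda_0 t}$ and staying inside the invariant interval, rather than a one-shot comparison---or, equivalently, invoke non-oscillation and positivity of the Cauchy function of the linearized equation, which is guaranteed exactly by the existence of the real positive root $\lambda_0$ (a criterion of the type developed in \cite{B1,B2}), and then transfer this positivity from the extremal kernel to the smaller actual one. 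Second, the infinite distributed delay with a backward-growing envelope requires care: convergence of $\int_{-\infty}^{h(t)}K\sin v\,ds$ rests on the decay rate $a/\tau$ of $K^\ast$ exceeding $\lambda_0$, and the envelope is tuned to this same rate so that the history cannot overpower the memory term. Once $0\le v(t)\le Me^{-\lambda_0 t}$ is established, positivity of the integrand keeps $\dot v<0$, so $v$ cannot reach $0$ in finite time, while the upper bound forces $v(t)\to 0$; combined with monotonicity this yields $v\to 0$ monotonically, i.e.\ $u(t)\to 2\pi k$ as claimed.
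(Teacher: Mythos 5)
Your reduction to $k=0$, $\varphi(t)\in(0,\pi)$ via the symmetries of $\sin$, and your observation that positivity of the integrand forces $\dot v<0$ while the history stays in $(0,\pi)$, both match the paper. The genuine gap is in your central step: you use $e^{-\lambda_0 t}$ in the wrong direction. Every inequality you invoke --- $K\le K^\ast$, $h(t)\le t-\tau$, $\sin v\le v$, and the envelope $\varphi(t)\le\varphi(0)e^{-\lambda_0 t}$ on the history --- makes the memory term \emph{smaller} in absolute value than in the extremal linear problem, hence pushes the solution \emph{up} relative to $\varphi(0)e^{-\lambda_0 t}$, never down. The correct conclusion (and the paper's) is a \emph{lower} barrier: on $[0,\tau]$ the kernel bound, the envelope, and the identity $\lambda_0(a-\lambda_0\tau)=be^{\lambda_0\tau}$ coming from (\ref{add3}) give $\dot v(t)\ge -\varphi(0)\lambda_0 e^{-\lambda_0 t}$, hence $v(t)\ge \varphi(0)e^{-\lambda_0 t}>0$; this is exactly what prevents the solution from falling through $0$, and it propagates by restarting the problem at $t_0=\tau, 2\tau,\dots$ after checking that the shifted history satisfies $0<v(t)\le v(\tau)e^{-\lambda_0(t-\tau)}$. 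The upper bound $v(t)\le Me^{-\lambda_0 t}$ that your sandwich needs is false in general: already in the autonomous extremal case with $\varphi(t)=\varphi(0)e^{-\lambda_0 t}$, strict concavity $\sin v<v$ puts the nonlinear solution strictly \emph{above} $\varphi(0)e^{-\lambda_0 t}$, and for admissible kernels much smaller than $K^\ast$ (still satisfying (b2)) the decay can be subexponential. So no monotone iteration or transfer of Cauchy-function positivity can produce that bound; positivity of fundamental solutions again yields lower-type comparison only. Relatedly, your remark that ``positivity of the integrand keeps $\dot v<0$, so $v$ cannot reach $0$ in finite time'' is a non sequitur: a negative derivative is what drives $v$ toward $0$; only the lower barrier excludes the crossing.

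With the upper bound gone, your proposal has no mechanism forcing the limit to be $0$ rather than some $d\in(0,\varphi(0))$. That mechanism is the divergence half of (b2), $\int_0^{\infty}dt\int_{-\infty}^{h(t)}K(t,s)\,ds=\infty$, which you invoke only to get strict negativity of $\dot v$ --- not its actual role. Once $v$ is known to be positive and decreasing, suppose $\lim_{t\to\infty}v(t)=d>0$; then $\sin v(s)$ is bounded away from $0$ for $s\ge 0$, so $\dot v(t)\le -b\delta\int_{0}^{h(t)}K(t,s)\,ds$ for some $\delta>0$, and integrating in $t$ and using the divergence condition sends $v$ to $-\infty$, a contradiction. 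The correct architecture is therefore: (i) the exponential \emph{lower} barrier, iterated over intervals of length $\tau$, keeps $v$ positive --- this is where (\ref{add3}) and the envelope on $\varphi$ enter; (ii) monotonicity gives a limit; (iii) the divergence part of (b2) forces that limit to be $0$. Your proposal has (ii), inverts (i), and omits (iii).
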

\begin{proof}
First assume that $\varphi(t) \in (0,\pi)$, $t \leq 0$, $u$ is a solution of \eqref{add1}.
{\bf
Let us prove that
\begin{itemize}
\item[(i)]
$u(t)$ is positive and non-increasing function;
\item[(ii)]
$u(t)$ satisfies the inequality
\begin{equation}
\label{add1_abc}
u(t) \geq u(0) e^{-\lambda_0 t}, \quad t \geq 0;
\end{equation}
\item[(iii)]
$u(t)$ tends to zero as $t \to \infty$.
\end{itemize}
Denote $u(t) = \varphi(t)$ for  $t \leq 0$ as well,
then by the assumptions of the theorem, $u(t)\leq u(0)e^{-\lambda_0 t}$, $t<0$.

We start verifying (ii) by induction. First, we prove that
$u(t) \geq u(0) e^{-\lambda_0 t}$ for $t \in [0,\tau]$, and then proceed to
any segment $[n\tau, (n+1)\tau]$.
In the inequalities below, we use the estimates of $K$ in (b2), the fact that $\sin u \leq u$ for $u>0$ and
$u(t)\leq u(0)e^{-\lambda_0 t}$ for $t<0$ to evaluate the derivative of $u$ on $[0,\tau]$:
\begin{align*}
\frac{du}{dt} = &-b \int_{-\infty}^{h(t)} K(t,s) \sin(u(s))~ds \geq -b \int_{-\infty}^{t-\tau} K(t,s) \varphi(s)~ds
\\
\geq & - \varphi(0) \frac{b}{\tau} \int_{-\infty}^{t-\tau} \exp\left\{ -\frac{a}{\tau} (t-s-\tau) \right\} e^{-\lambda_0 s} ds
\\
= & - \varphi(0) \frac{b}{\tau} \exp\left\{ -\frac{a}{\tau} (t-\tau)\right\}  \int_{-\infty}^{t-\tau}
\exp\left\{ \left( \frac{a}{\tau}-\lambda_0 \right)s
 \right\} ~ds
\\
= & - \varphi(0) \frac{b}{a-\lambda_0 \tau} e^{-\lambda_0 (t-\tau)} = - \varphi(0) \lambda_0 e^{-\lambda_0 t},
\end{align*}
since $a-\lambda_0 \tau = \frac{b}{\lambda_0} e^{-\lambda_0 \tau}$ by \eqref{add3}.

Since  $u'(t) \geq - u(0) \lambda_0  e^{-\lambda_0 t}$, for any $s$, $t$ satisfying $0\leq t \leq s \leq \tau$, the solution
on $[t,s]$ is not below the curve
$u(s)=u(t)  e^{-\lambda_0 (s-t)}$ on $[0,\tau]$, and $u(s) \geq u(t)  e^{-\lambda_0 (s-t)}$.
Taking $s=\tau$, we obtain
$$
u(\tau) \geq u(t) e^{-\lambda_0 (\tau-t)},\mbox{~~ or ~~~} u(t) \leq u(\tau) e^{-\lambda_0 (t-\tau)},
\quad t \in [0,\tau].
$$

Hence $u(\tau) e^{-\lambda_0 (t-\tau)}$ is an upper bound of $u(t)$  on $[0,\tau]$, it is also
a bound on $(-\infty,\tau]$ since $u(0) \leq u(\tau) e^{-\lambda_0 \tau}$ and for $t \leq 0$,
\begin{equation}
\label{tournesol1}
u(t) \leq  u(0) e^{-\lambda_0 t} \leq u(\tau) e^{-\lambda_0 (t-\tau)}.
\end{equation}
Thus $u(t) \leq u(\tau) e^{-\lambda_0 (t-\tau)}$ is valid on $(-\infty,\tau]$.

Consider further the initial problem with a shifted initial point $t_0=\tau$ instead of  $t_0=0$,
we get the same estimate as in (\ref{tournesol1}) for any $t \in (-\infty,n\tau]$ by induction.
Hence,
$$u(t) \geq u(n\tau)e^{-\lambda_0 (t-n\tau)}\geq u(0) e^{-\lambda_0 t}>0, ~~[n\tau, (n+1)\tau],$$
and the induction step proves (\ref{add1_abc}) and justifies (ii).

Thus the solution is positive for any $t$. From the form of the
equation and non-negativity of $K$ and $u \in (0,\pi)$ follow that the solution is also non-increasing,
which justifies (i).

Since $u$ is non-increasing for $t \geq 0$ and positive there is $\lim_{t \to \infty} u(t)=d$. Assuming $d>0$ we obtain
from $\displaystyle \int_0^{\infty}  \int_{-\infty}^{h(t)} K(t,s)~ds~dt =\infty$ in (b2) that $\lim_{t \to
\infty} u(t)=-\infty$, which is a contradiction, thus (iii) is also valid.
}

A similar process proves the case  $\varphi(t) \in (-\pi,0)$. If $\varphi(t) \in 2\pi k -\pi,2\pi k)$,
we apply the same argument to $u-2\pi k$.
\end{proof}

Note that sharp conditions when all solutions of characteristic equation (\ref{add3}) have positive real parts
can be found in \cite[Lemma 3.1, p. 470]{somolinos}.

\begin{Cor}
Let
\begin{equation}
\label{explicit}
\tau<\frac{a^2}{4b} e^{-a/2}
\end{equation}
and $|\varphi(t)-2\pi k| \leq \varphi(0) e^{-\lambda_0 t}$, $t<0$,
then any solution of (\ref{add1})-(\ref{add2}) with
the initial conditions satisfying  $\varphi(t) \in (2\pi k, 2\pi k +\pi)$, $k \in \N$, is monotone decreasing
and tends to $2\pi k$ as $t \to \infty$. Any solution with $\varphi(t) \in (2\pi k -\pi,2\pi k)$, $k \in \N$
tends to $2\pi k$ as $t \to \infty$.
\end{Cor}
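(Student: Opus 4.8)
The plan is to recognize this corollary as a direct specialization of Theorem~\ref{theorem_add1}: all of its conclusions (monotone decrease on $(2\pi k,2\pi k+\pi)$ and convergence to $2\pi k$ in both cases) are already delivered by that theorem, provided the characteristic equation (\ref{add3}) possesses a positive root $\lambda_0$. Since the hypothesis on the initial function, $|\varphi(t)-2\pi k|\leq \varphi(0)e^{-\lambda_0 t}$ for $t<0$, is assumed verbatim in the corollary, the only thing left to establish is that the explicit inequality (\ref{explicit}) forces the existence of such a root. Thus the entire proof reduces to a one-variable existence argument for (\ref{add3}).

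First I would introduce the continuous function $F(\lambda)=\tau\lambda^2-a\lambda+be^{\lambda\tau}$, whose positive zeros are exactly the positive roots of (\ref{add3}), and note that $F(0)=b>0$. The key step is to exhibit a single point where $F$ is negative; the natural candidate is the minimizer $\lambda_\ast=a/(2\tau)$ of the quadratic part $\tau\lambda^2-a\lambda$, which is positive because $a,\tau>0$. There the quadratic part attains its minimum $-a^2/(4\tau)$ and $e^{\lambda_\ast\tau}=e^{a/2}$, so
\[
F\!\left(\frac{a}{2\tau}\right)=-\frac{a^2}{4\tau}+be^{a/2}.
\]

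The calculation that makes everything work is that this value is negative precisely when $be^{a/2}<a^2/(4\tau)$, i.e. when $\tau<\frac{a^2}{4b}e^{-a/2}$, which is exactly (\ref{explicit}). Hence, under the corollary's hypothesis, $F(a/(2\tau))<0$ while $F(0)=b>0$, so by the intermediate value theorem $F$ has a zero $\lambda_0\in(0,a/(2\tau))$, a genuine positive root of (\ref{add3}). Invoking Theorem~\ref{theorem_add1} with this $\lambda_0$ then yields all the stated conclusions.

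There is essentially no serious obstacle here; the only mild subtlety is the decision to test $F$ at the minimizer of the quadratic part rather than at the true minimizer of $F$ (the latter is shifted by the growing exponential $be^{\lambda\tau}$ and is not available in closed form). Evaluating at $a/(2\tau)$ is exactly what converts the existence question into the clean algebraic threshold (\ref{explicit}); one need only verify that $a/(2\tau)>0$ so that the produced root is positive, as demanded by the theorem.
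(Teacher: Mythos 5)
Your proposal is correct and is essentially identical to the paper's own proof: the paper also sets $f(\lambda)=\tau\lambda^2-a\lambda+be^{\lambda\tau}$, notes $f(0)=b>0$, checks that (\ref{explicit}) forces $f\left(a/(2\tau)\right)<0$, and then invokes Theorem~\ref{theorem_add1}. Incidentally, your evaluation $f\left(a/(2\tau)\right)=-a^2/(4\tau)+be^{a/2}$ is the correct one; the paper's printed value $-a^2/(4\tau^2)+be^{a/2}$ contains a small typo in the denominator.
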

\begin{proof}
Let $f(\lambda)=\tau\lambda^2 - a\lambda+b e^{\lambda\tau}$, then $f(0)=b>0$. Inequality
(\ref{explicit}) implies $f(a/(2\tau))=-a^2/(4 \tau) +b e^{a/2}<0$, so equation (\ref{add3}) has a positive solution.
We invokef Theorem~\ref{theorem_add1} to conclude the proof.
\end{proof}

The following example illustrates that conditions (b1)-(b3) do not guarantee boundedness of the solutions of equation (\ref{add1}) with the
generalized kernel.

\begin{Ex}
\label{tournesol_ex}
Let $\displaystyle a=\frac{1}{3} \ln \left( \frac{4}{\pi} \right)$, $b=2$, $\tau=\pi$,
$$ K(t,s) = \left\{  \begin{array}{lll} {\displaystyle \frac{1}{4}}, & t \in [(2k-1)\pi, (2k+1)\pi],
& s \in [(2k-3)\pi,(2k-2)\pi], \\
0, & t \in [(2k-1)\pi, (2k+1)\pi],
& s \not\in [(2k-3)\pi,(2k-2)\pi].  \end{array}  \right.
$$
Then obviously $K(t,s)=0$ for $s>t-\pi=t-\tau$, and also for $t-s>4\pi$. The exponential estimate has the form
$$0 \leq K(t,s) \leq \frac{1}{\pi} e^{-\frac{1}{3\pi}\ln(4/\pi)(t-s-\pi)} =\frac{1}{\pi} \left( \frac{4}{\pi}
\right)^{-(t-s-\pi)/(3\pi)},
$$
but as $t-s-\pi \leq 3\pi$ whenever $K(t,s) \neq 0$, the right-hand side is not less than $\displaystyle
\frac{1}{\pi} \left( \frac{4}{\pi}\right)^{-1}=\frac{1}{4}$, thus $K(t,s)$ has an exponential estimate as in
(b2). Further, $u(t)=t$ is an unbounded solution of (\ref{add1}). In fact, let $u(t)=t$, $t \in [-\pi,\pi]$.
Then for $t\in [\pi, 3\pi]$ we have
$\displaystyle \frac{du}{dt} = -2 \int_{-\pi}^0 \frac{1}{4} \sin(t) ~dt = 1,$
so $u(t)=t$ on $[-\pi,3\pi]$. Due to the periodicity of the sine function and $K$, we have $\displaystyle \frac{du}{dt} \equiv
1$.
Thus the solution is a linear function  $u(t)=t$ and it is unbounded.
\end{Ex}

In the following theorem we will prove that for a non-autonomous case the solution
of the sunflower equation is bounded by a linear function.

Consider the non-autonomous sunflower equation
\begin{equation}\label{gsun}
\ddot{x}(t)+a(t)\dot{x}(t)+b(t)\sin x(h(t))=0.
\end{equation}
\begin{Th}
\label{tt} Suppose $a(t)\geq a_0>0, |b(t)|\leq b_0$.
For any solution $x(t)$ of equation~(\ref{gsun}) we have the estimates
\begin{equation}\label{gsun1}
|x(t)|\leq |x(t_0)|+\left(|\dot{x}(0)|+\frac{b_0}{a_0}\right)t, ~~
|\dot{x}(t)|\leq |\dot{x}(0)|+\frac{b_0}{a_0}.\nonumber
\end{equation}
\end{Th}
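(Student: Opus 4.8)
The plan is to reduce the second-order equation to a scalar first-order linear equation for the velocity $v(t):=\dot{x}(t)$ and to exploit that the only nonlinearity, $b(t)\sin x(h(t))$, is uniformly bounded. Rewriting (\ref{gsun}) as
\begin{equation}
\dot{v}(t)+a(t)v(t)=-b(t)\sin x(h(t)),\nonumber
\end{equation}
I would treat the right-hand side as a forcing term with magnitude at most $b_0$, since $|\sin(\cdot)|\leq 1$ and $|b(t)|\leq b_0$. Note that the delay $h$ enters only through this bounded forcing, so it is irrelevant to the estimate; all that matters is the uniform lower bound $a(t)\geq a_0$ on the damping.

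The key step is the integrating-factor representation. Setting $\mu(t)=\exp\left(\int_{0}^{t}a(s)\,ds\right)$ gives $\frac{d}{dt}(\mu v)=-\mu\, b\sin x(h)$, whence
\begin{equation}
v(t)=\frac{v(0)}{\mu(t)}-\frac{1}{\mu(t)}\int_{0}^{t}\mu(s)\,b(s)\sin x(h(s))\,ds.\nonumber
\end{equation}
Taking absolute values and using $\int_{s}^{t}a(u)\,du\geq a_0(t-s)$, which yields $\mu(s)/\mu(t)\leq e^{-a_0(t-s)}$ and $1/\mu(t)\leq e^{-a_0 t}\leq 1$, I would bound the convolution integral by
\begin{equation}
\frac{1}{\mu(t)}\int_{0}^{t}\mu(s)\,b_0\,ds\leq b_0\int_{0}^{t}e^{-a_0(t-s)}\,ds=\frac{b_0}{a_0}\left(1-e^{-a_0 t}\right)\leq \frac{b_0}{a_0},\nonumber
\end{equation}
so that $|\dot{x}(t)|=|v(t)|\leq |\dot{x}(0)|\,e^{-a_0 t}+\frac{b_0}{a_0}\leq |\dot{x}(0)|+\frac{b_0}{a_0}$, which is the second estimate.

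The first estimate then follows by integration: from $x(t)=x(t_0)+\int_{t_0}^{t}\dot{x}(s)\,ds$ together with the uniform velocity bound just obtained,
\begin{equation}
|x(t)|\leq |x(t_0)|+\int_{t_0}^{t}|\dot{x}(s)|\,ds\leq |x(t_0)|+\left(|\dot{x}(0)|+\frac{b_0}{a_0}\right)t.\nonumber
\end{equation}
The only delicate point is securing the \emph{time-independent} constant $b_0/a_0$ in the velocity bound; this hinges on the lower bound $a(t)\geq a_0$ producing the decaying kernel $e^{-a_0(t-s)}$, whose total mass over $[0,t]$ never exceeds $1/a_0$. Everything else is routine. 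An equivalent route avoiding the explicit integrating factor is to observe that $w=|v|$ satisfies the differential inequality $D^{+}w\leq -a_0 w+b_0$ in the sense of upper Dini derivatives and to compare with the scalar equation $\dot{z}=-a_0 z+b_0$, which delivers the same constant.
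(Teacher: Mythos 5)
Your proposal is correct and follows essentially the same route as the paper: reduce to the first-order linear equation $\dot{v}+a(t)v=-b(t)\sin x(h(t))$ for $v=\dot{x}$, treat the sine term as a forcing bounded by $b_0$, apply the integrating-factor (variation-of-constants) representation with the kernel bound $e^{-a_0(t-s)}$ coming from $a(t)\geq a_0$, and then integrate the velocity bound to get the linear bound on $|x(t)|$. If anything, your write-up is slightly more careful than the paper's, whose displayed variation-of-constants formula contains a typographical slip that your version fixes.
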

\begin{proof}
Denote $\dot{x}=y, f(t)=b(t)\sin x(h(t))$, where $|f(t)|\leq b_0.$
Then $\dot{y}(t)+a(t)y(t)+f(t)=0$, hence $y(t)=y(0)+\int_0^t e^{-\int_s^t a(\tau)d\tau}f(s)ds$.
Then
 $$
|\dot{x}(t)|\leq |\dot{x}(0)|+\int_0^t e^{-a_0(t-s)}|f(s)ds|\leq |\dot{x}(0)|+\frac{b_0}{a_0},
$$$$
x(t)=x(0)+\int_0^t \dot{x}(s)ds, ~|x(t)|\leq |x(t_0)|+\left(|\dot{x}(0)|+\frac{b_0}{a_0}\right)t.
$$
Local stability conditions for equation~(\ref{gsun}) one can find in the following theorem.
\end{proof}

{\bf
The following lemma is a corollary of \cite[Theorem 8.3]{ABBD}
\begin{Lem}
\label{theorem8.3}
Suppose that ordinary differential equation
$$
\ddot{x}(t)+a(t)\dot{x}(t)+b(t)x(t), ~~a(t)\geq 0, b(t)\geq 0,
$$
has a positive fundamental function, then the equation
 $$
\ddot{x}(t)+a(t)\dot{x}(t)+b_1(t)x(t),
$$
where $b_1(t)\leq b(t)$ also has a positive fundamental function.
\end{Lem}
}

\begin{Th}
Suppose $0<a \leq a(t)\leq A$, $0<b \leq b(t)\leq B,~ t-h(t)\leq \tau$
and at least one of the following conditions hold:\\
1) $\displaystyle  B\leq \frac{a^2}{4},~ \frac{a}{2}(A-a)< b-a B\tau$,\\
2) $\displaystyle  b\geq \frac{a}{2}\left(A-\frac{a}{2}\right),~
B\left(1+a\tau\right)<\frac{a^2}{2}$.\\ \\
Then any equilibrium $x(t)=2k\pi, k=0,\dots$
of equation~(\ref{gsun}) is locally asymptotically stable. Any equilibrium $x(t)=(2k+1)\pi, k=0,\dots$ is not asymptotically stable.
\end{Th}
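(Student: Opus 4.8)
The plan is to treat the even and odd equilibria separately through the substitution $x = j\pi + z$, where $j \in \{2k, 2k+1\}$. The key observation is that the even equilibria linearize to an equation already handled in Section~2, so their stability is essentially a corollary of Theorem~\ref{th3}, whereas the odd equilibria produce a delayed term of the opposite sign to which Theorem~\ref{th3} cannot apply, and these will require a separate positivity argument. I expect the odd case to be the real content of the proof.

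For an even equilibrium, substituting $x(t) = 2k\pi + z(t)$ into (\ref{gsun}) and using $\sin(2k\pi + z(h(t))) = \sin z(h(t))$ gives, after writing $b(t)\sin z(h(t)) = \tilde b(t)\, z(h(t))$ with $\tilde b(t) = b(t)\sin(z(h(t)))/z(h(t))$,
\begin{equation}
\ddot z(t) + a(t)\dot z(t) + \tilde b(t)\, z(h(t)) = 0. \nonumber
\end{equation}
Since $\sin(w)/w \to 1$ as $w \to 0$, for solutions staying in a sufficiently small neighborhood of the equilibrium the coefficient $\tilde b(t)$ remains in a band of the form $0 < b' \le \tilde b(t) \le B$ with $b'$ arbitrarily close to $b$, so the equation is of the form (\ref{12}) with $m=1$. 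Conditions 1) and 2) of the present statement coincide term for term with conditions 1) and 2) of Theorem~\ref{th3} specialized to $m=1$ (with $b_1=b,\ B_1=B,\ \tau_1=\tau$); because these conditions are strict where $b$ enters, they persist under the small perturbation $b \mapsto b'$ on a small enough neighborhood. Hence Theorem~\ref{th3} yields exponential decay of $z$, i.e.\ local asymptotic stability of $2k\pi$.

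For an odd equilibrium, $x(t) = (2k+1)\pi + z(t)$ gives $\sin((2k+1)\pi + z(h(t))) = -\sin z(h(t))$, hence
\begin{equation}
\ddot z(t) + a(t)\dot z(t) = b(t)\sin z(h(t)), \nonumber
\end{equation}
where the delayed term now has the destabilizing sign and Theorem~\ref{th3} is inapplicable. Here I would exhibit, for arbitrarily small $z_0 \in (0,\pi)$, a solution that never returns to the equilibrium. Prescribe $z \equiv z_0$ on $[t_0-\tau, t_0]$ with $\dot z(t_0)=0$, and use the integrating factor $\mu(t)=e^{\int_{t_0}^t a(s)\,ds}$ to write
\begin{equation}
\frac{d}{dt}\left( \mu(t)\,\dot z(t) \right) = \mu(t)\, b(t)\sin z(h(t)). \nonumber
\end{equation}
A first-zero argument then shows $z$ stays positive: if $T$ were the first time $z(T)=0$, then $z>0$ on $[t_0-\tau,T)$, and since $b(t)\ge b>0$ and $h(t)\ge t-\tau\ge t_0-\tau$ while $z$ remains in $(0,\pi)$ (so $\sin z > 0$), the right-hand side is positive; thus $\mu\dot z$ increases from $0$, giving $\dot z > 0$ and $z$ strictly increasing, so $z(T)\ge z_0>0$, a contradiction. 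Consequently $z$ is positive and nondecreasing, $z(t)\ge z_0$ for all $t$, and $z \not\to 0$; as $z_0$ is arbitrarily small, $(2k+1)\pi$ is not asymptotically stable.

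The main obstacle is making this monotonicity argument airtight against the nonlinearity of $\sin$ in the non-autonomous setting: the computation is transparent only while the orbit stays in $(0,\pi)$, so one must dispose of the borderline scenario in which $z$ increases to a finite limit $L \in (0,\pi]$ without leaving. If $z$ reaches $\pi$ it has already moved the fixed distance $\pi - z_0$ from the equilibrium, so it cannot converge to $0$; if instead $z\to L<\pi$ with $\dot z\to 0$, this is incompatible with the strictly positive forcing in the displayed identity for $\mu\dot z$. Either way $z$ fails to converge, which is exactly the required non-attractivity. The same conclusion can also be reached at the linearized level, where $\ddot z + a(t)\dot z - b(t)z(h(t))=0$ admits a positive increasing solution by the identical integrating-factor computation.
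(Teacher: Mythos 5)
Your proposal and the paper agree on the even equilibria: both reduce to the linear equation $\ddot y(t) + a(t)\dot y(t) + b(t) y(h(t)) = 0$ and invoke Theorem~\ref{th3} (the paper by formal linearization, you by the exact rewriting $b(t)\sin z(h(t)) = \tilde b(t)\, z(h(t))$ in the style of Section~3). Two caveats there: your band argument is circular as stated, since the bound $\tilde b(t)\ge b'>0$ is only available while the solution stays near the equilibrium, which is what you are trying to prove — you need a continuation/bootstrap step exploiting the uniformity of the exponential estimate behind Lemma~\ref{lemma1}; also, the first inequality in condition 2), $b\ge\frac{a}{2}\left(A-\frac{a}{2}\right)$, is \emph{not} strict, so it does not automatically survive the perturbation $b\mapsto b'$, and one must check that the M-matrix determinant retains slack (it does, because $B(1+a\tau)<a^2/2$ is strict). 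Where you genuinely depart from the paper is the odd equilibria. The paper linearizes to $\ddot y(t) + a(t)\dot y(t) - b(t) y(h(t)) = 0$ and cites a comparison theorem \cite[Theorem 8.3]{ABBD}: since the fundamental function of $\ddot z(t) + a(t)\dot z(t) = 0$ is positive with nonnegative derivative, so is that of the linearized equation, hence it cannot tend to zero. You instead work directly with the nonlinear equation and construct, for arbitrarily small $z_0$, a solution with constant initial segment that is nondecreasing by a first-zero/integrating-factor argument. This is more elementary and self-contained: it needs no external citation and no principle transferring instability of the linearization back to the nonlinear equation (a step the paper glosses over), and your closing remark about running the same computation on the linear equation in fact reproves by hand the step the paper outsources to \cite{ABBD}.

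There is, however, a genuine gap in your escape scenario. You claim that if $z$ reaches $\pi$ at a finite time then it "has already moved the fixed distance $\pi-z_0$ from the equilibrium, so it cannot converge to $0$." That is a non sequitur: leaving a neighborhood at a finite time says nothing about the limit as $t\to\infty$; a priori the solution could swing out past $\pi$ and still approach $0$ later, and your monotonicity mechanism gives no control once $z$ exceeds $\pi$, because the forcing $b(t)\sin z(h(t))$ then changes sign. The conclusion can be salvaged by a standard repair that you should make explicit: argue by contradiction with the full definition of asymptotic stability. If $(2k+1)\pi$ were asymptotically stable it would in particular be Lyapunov stable, so for $\varepsilon=\pi/2$ there is $\delta>0$ such that solutions with initial data within $\delta$ of the equilibrium remain within $\pi/2$ for all time; taking $z_0<\delta$ then rules out the escape case, your monotonicity argument applies on $[t_0,\infty)$, and $z(t)\ge z_0>0$ contradicts attractivity. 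Equivalently: either the solution escapes, violating stability, or it stays in $(0,\pi)$ and remains bounded away from $0$, violating attractivity; in both cases asymptotic stability fails. With this one fix your instability half is complete, and it is arguably more informative than the paper's argument, since it exhibits explicit nonconvergent solutions of the nonlinear equation itself rather than only of its linearization.
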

\begin{proof}
For the equilibrium $x(t)=2k\pi$, the linearization of equation~(\ref{gsun}) has the form
$$
\ddot{y}(t)+a(t)\dot{y}(t)+b(t)y(h(t))=0,
$$
which is exponentially stable by Theorem~\ref{th3}.

{\bf 
It is well known (see, for example, \cite{B3})
that exponential stability of a linearized equation implies asymptotic stability of the nonlinear equation,
in our case equation~(\ref{gsun}).
}

For the equilibrium $x(t)=(2k+1)\pi$, the linearized equation for (\ref{gsun})
has the form
\begin{equation}\label{s1}
\ddot{y}(t)+a(t)\dot{y}(t)-b(t)y(h(t))=0.
\end{equation}
Consider now the ordinary differential equation
\begin{equation}\label{s2}
\ddot{z}(t)+a(t)\dot{z}(t)=0.
\end{equation}
The fundamental function of equation~(\ref{s2}) (the solution of initial value problem with $z(0)=0, z'(0)=1$)
has the form
$$
z(t)=\int_{0}^t e^{-\int_0^s a(\tau)d\tau}ds,
$$
which is a positive function for $t>0$ with a nonnegative derivative.

By Lemma~\ref{theorem8.3},
for the  fundamental function $y(t)$ of equation~(\ref{s1}) we have $y(t)>0$,
$y'(t)\geq 0$ for $t>0$.
Hence $y(t)$ does not tend to zero, and thus equation~(\ref{s1}) is not asymptotically stable.
\end{proof}

\section{Concluding Remarks}

The technique of reduction of a high-order linear differential equation to a system by the substitution $x^{(k)}=y_{k+1}$ is quite common.
However, this substitution does not depend on the parameters of the original equation, and therefore does not
offer new insight from a qualitative analysis point of view.
Instead, we proposed  a substitution which exploits the parameters of the original model.
By using that approach, a broad class of the second order non-autonomous linear equations with delays was
examined and explicit easily-verifiable sufficient stability conditions were obtained. There is a natural
extension of this approach to stability analysis of high-order models. For the nonlinear second order
non-autonomous equations with delays we applied the linearization technique and the results obtained for
linear models. Our stability tests  are applicable to some milling models, e.g.  models (\ref{stepa}) and
(\ref{L}), and to a non-autonomous Kaldor--Kalecki business cycle model.
Several numerical examples illustrate the application of the stability tests.
We suggest that a similar technique can be developed for higher order linear delay equations,
with or without non-delay terms. For a non-autonomous version of a classical sunflower model, we verified that the derivative is bounded and
thus the solution has a linear bound.
Example~\ref{tournesol_ex} illustrates the existence of an unbounded linearly growing solution for
the generalized sunflower equation. We also obtained sufficient conditions under which a solution tends to one of
the infinite number of the equilibrium points.

Solution of the following problems will complement the results of the present paper:
\begin{enumerate}
\item
In all stability conditions obtained, we used lower and upper bounds of the coefficients and the delays.
It is interesting to obtain stability conditions in an integral form, for instance, in the assumptions of
Theorem~\ref{th3} replace the term $a \tau_k$  by, generally, a smaller term $\int_{h_k(t)}^t a(s)~ds$.
\item
Apply the technique used in the paper to examine  delay  differential  equations of higher order.
\item
Is it possible to generalize Theorem~\ref{theorem_add1} to the case when the initial function $\varphi(t) \in (2\pi k-\pi, 2\pi
k+\pi)$ and characteristic equation (\ref{add3})
has a solution with a  positive real part?
\item
Establish necessary stability conditions for the equations considered in this paper.
\item
For the sunflower equation and its modifications establish set of conditions to guarantee boundedness of
all solutions.
\end{enumerate}

\section{Acknowledgments}
The authors also would like to extend their appreciation to the anonymous referees for helpful suggestions which have greatly improved this paper.\\

L. Berezansky was partially supported by Israely Ministry of Absorption,
E. Braverman was partially supported by the NSERC grant RGPIN/261351-2010,
L. Idels was partially supported by a grant from VIU.
{\bf   The authors are grateful to the anonymous reviewer whose comments significantly 
contributed to the presentation of the paper.}

\end{document}